\newcommand{\setd}[2]{\,\left\{#1\ \colon\ #2\right\}}%set with a decription usage\set{elements}
\newtheorem{theorem}{Theorem}[section]
\newtheorem{lemma}[theorem]{Lemma}
\newtheorem{proposition}[theorem]{Proposition}
\newtheorem{definition}[theorem]{Definition}
\newtheorem{question}[theorem]{Question}
\newtheorem{remark}[theorem]{Remark}
\newcommand{\supp}{\operatorname{supp}}
\newcommand{\RR}{\mathbb{R}}
\newcommand{\NN}{\mathbb{N}}
\newcommand{\C}{\mathscr{C}}
\newcommand{\X}{\mathcal{X}}
\newcommand{\EL}{\mathcal{L}}
\title[Invariant expectations and bounded cohomology]{Invariant expectations and vanishing of bounded cohomology for exact groups}
\author{Ronald G. Douglas}
\author{Piotr W. Nowak}
\address{Department of Mathematics, Texas A\&M University, College Station, TX 77843}
\email{rdouglas@math.tamu.edu, pnowak@math.tamu.edu}
\keywords{exact group; bounded cohomology; Hochschild cohomology; convolution algebra; 
invariant expectation}
\thanks{During this research the first author was partially supported by NSF grant DMS-0600865. The second author was partially supported by NSF grant DMS-0900874.}
\begin{document}
\maketitle

\begin{abstract}
We study exactness of groups and establish
a characterization of exact groups in terms of the existence of a continuous linear operator,
called an invariant expectation,
whose properties make it a weak counterpart  of an invariant mean on a group.
We apply this operator to show that exactness of a finitely generated group $G$ 
implies the vanishing of
the bounded cohomology of $G$ with coefficients in a new class of modules, 
which are defined using the Hopf algebra structure of $\ell_1(G)$. 
\end{abstract}

\newcounter{aux1}
\newcounter{aux2}

\section{Introduction}

Exactness is a weak amenability-type property of finitely generated groups.
It was defined in \cite{kirchberg-wassermann-1} in terms of properties of the 
minimal tensor product of the reduced group $C^*$-algebra.
Similar to amenability, exactness has a few equivalent definitions which are each of
separate interest in different areas of mathematics. 
In particular, exactness is equivalent to the  existence of a topologically
amenable action of the group on a compact space \cite{higson-roe} 
and to Yu's property A \cite{guentner-kaminker-exactness,ozawa}. 
For this reason exactness has many 
interesting applications in analysis, geometry and topology. 
Most notably, Yu \cite{yu-embeddings} proved that groups with property A
satisfy the Novikov conjecture.

Amenable groups are precisely
the ones which carry an invariant mean; that is, a functional on $\ell_{\infty}(G)$ which
is positive, preserves the identity and is invariant under the natural action of $G$ on $\ell_{\infty}(G)$. 
Invariant means allow for
averaging on amenable groups, which is precisely
what makes such groups so convenient to work with. In the case of exact groups there
was no parallel characterization and in this article we investigate the existence of such
a counterpart and some of its applications.

Coarse-geometric versions of classical notions or results in group theory can sometimes be
obtained by considering the problem ``with coefficients
in $\ell_{\infty}(G)$", where the precise meaning of this phrase varies with the context. 
For instance, the coarse  Baum-Connes conjecture 
for a group $G$ is the Baum-Connes conjecture with coefficients in the algebra
$\ell_{\infty}(G,\mathcal{K})$, where $\mathcal{K}$ denotes the compact operators on 
an infinite-dimensional, separable Hilbert space \cite{yu-coarse.baum.connes}, see also \cite{spakula}.
Similarly, in coarse geometry  the reduced crossed product $\ell_{\infty}(G)\rtimes_r G$ 
is an analogue of 
the reduced group $C^*$-algebra of $G$.
It is this point of view that  suggests the study of the space of operators from a given space into
the algebra $\ell_{\infty}(G)$, which, loosely speaking, plays the role of a ``dual with
coefficients in $\ell_{\infty}(G)$".

Let $G$ be a finitely generated group. 
Given a left Banach $G$-module $\X$ we consider the space $\EL(\X,\ell_{\infty}(G))$
of continuous linear operators from $\X$ to $\ell_{\infty}(G)$. This space is naturally 
a bounded Banach $G$-module by pre- and post-composing with the actions of $G$ on 
$\X$ and $\ell_{\infty}(G)$.
Additionally, we can identify $\EL(\X,\ell_{\infty}(G))$
with the space $\ell_{\infty}(G,\X^*)$, which is a dual space to $\ell_1(G,\X)$. 
Using this identification we can naturally equip $\EL(\X,\ell_{\infty}(G))$ with 
a weak-* topology.

An invariant expectation
on the group $G$ is then an operator $M$ from the $G$-module $\EL(\ell_u(G),\ell_{\infty}(G))$ into 
$\ell_{\infty}(G)$, where $\ell_u(G)$ is a certain Banach algebra associated to the group, obtained
as a completion of the algebraic crossed product $\ell_{\infty}(G)\rtimes_{alg} G$
(the precise definition is given in \ref{definition : the uniform convolution algebra}).
The properties of the invariant expectation are similar to properties of invariant means on groups;
namely, it is positive, unital in an appropriate sense and  a limit of elements
of a special form. The most important property of a mean, invariance, is replaced here by equivariance 
with respect to the $G$-actions.
We refer to Definition \ref{definition : invariant expectation} for
details.
\begin{theorem}\label{theorem : main 2} 
A finitely generated group $G$ is exact if and only if there exists an
invariant expectation on $G$.
\end{theorem}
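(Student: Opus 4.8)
The plan is to prove the two implications by bridging the invariant expectation with Yu's property A, which for finitely generated groups is equivalent to exactness by \cite{guentner-kaminker-exactness, ozawa}. This reformulation trades the abstract operator $M$ for a concrete family of almost-invariant probability measures on $G$, which is far easier to manipulate directly; the content of the theorem is then the passage back and forth between these two descriptions of the same averaging phenomenon.

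For the forward implication, suppose $G$ is exact. Property A furnishes, for every $\epsilon > 0$ and $R > 0$, a map $x \mapsto \mu_x$ from $G$ into the finitely supported probability measures on $G$ with $\|\mu_x - \mu_y\|_1 < \epsilon$ whenever $d(x,y) \le R$, and $\supp \mu_x \subseteq B(x,S)$ for a uniform radius $S = S(\epsilon,R)$. Taking such families along a net with $\epsilon_n \to 0$ and $R_n \to \infty$, I would define approximating operators $M_n : \EL(\cl(G), \ell_\infty(G)) \to \ell_\infty(G)$ by $\big(M_n(T)\big)(x) = \big(T(\mu_x)\big)(x)$, where $\mu_x$ is viewed inside $\cl(G)$ as the convolution element $\sum_g \mu_x(g)\, u_g$. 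These $M_n$ are precisely of the special form required in Definition \ref{definition : invariant expectation}; each is positive and unital because $\mu_x$ is a probability measure, while the slow variation $\|\mu_x - \mu_y\|_1 < \epsilon_n$ controls the failure of $M_n$ to intertwine the two $G$-actions, forcing the equivariance defect to tend to $0$. Using the identification $\EL(\cl(G), \ell_\infty(G)) \cong \ell_\infty(G, \cl(G)^*)$ as the dual of $\ell_1(G, \cl(G))$, the net $(M_n)$ lies in a weak-* compact set, and I would take $M$ to be a weak-* cluster point. Positivity and unitality pass to such cluster points, the vanishing defect yields exact equivariance of $M$, and $M$ is by construction a weak-* limit of special elements, so $M$ is an invariant expectation.

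For the reverse implication, suppose an invariant expectation $M$ exists. Here I would run the construction backwards: evaluating $M$ on a suitable family of operators built from the translation structure of $\cl(G)$ should produce an $\ell_\infty(G)$-valued, $G$-equivariant averaging object whose positivity and unitality let me read off a net of almost-invariant probability measures witnessing property A, or equivalently a topologically amenable action of $G$ on $\beta G$ in the sense of \cite{higson-roe}. The crux of both directions, and the step I expect to be the main obstacle, sits exactly at this interface between the single operator $M$ and the concrete families of property A. In the forward direction the delicate point is the defect estimate: one must show that slow variation of the $\mu_x$ forces asymptotic equivariance of $M_n$ on all of $\cl(G)$, not merely on group elements, which requires understanding how the convolution-algebra multiplication interacts with coordinatewise averaging. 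In the reverse direction the difficulty is selecting test operators for $M$ so that the extracted functions are finitely supported, or approximable by such, and obey the uniform support-growth condition built into property A.
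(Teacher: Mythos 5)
Your proposal has genuine gaps in both directions, located exactly where the content of the theorem lies. In the forward direction, your approximating operators are not of the form that condition (a) of Definition \ref{definition : invariant expectation} demands. That condition requires $M\in\mathcal{W}$, the weak-* closure of $\mathcal{W}_{00}$, so $M$ must be a weak-* limit of evaluations $\hat{\xi}$, $\hat{\xi}(T)=T(\xi)$, at \emph{fixed} finitely supported elements $\xi\in\ell_u(G)$. Your $M_n(T)(x)=\big(T(\mu_x)\big)(x)$ feeds a different input into $T$ for each coordinate $x$ at which the output is evaluated, and a general $T\in\EL(\ell_u(G),\C)$ does not act coordinatewise: for instance, if $T(\zeta)$ is the constant function with value $\zeta_{g_0}(x_0)$, then $T(\xi)$ is constant in $x$ for every fixed $\xi$, whereas $M_n(T)(x)=\mu_x(g_0)$ is not. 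So $M_n\neq\hat{\xi}$ for any $\xi$, and your cluster point $M$ is never shown to lie in $\mathcal{W}$ --- which matters, because membership in $\mathcal{W}$ is precisely what the converse implication consumes. The repair is to fold the whole family $\{\mu_x\}$ into a \emph{single} element $\xi\in\ell_u(G)$, roughly $\xi_g(x)=\mu_x(xg)$: then the uniform support radius becomes finite support of $\xi$, the $\ell_1$-variation becomes $\Vert\xi-s\star\xi\Vert_u\le\varepsilon$, and $M=w^*-\lim_\beta\hat{\xi}_\beta$ lies in $\mathcal{W}$ by construction. This packaging of property A is exactly the paper's Definition \ref{definition : exactness}, after which the forward direction is the paper's argument. (Also, the Banach--Alaoglu argument must be run in $\EL(\EL(\ell_u(G),\C),\C)$, the dual of $\ell_1(G,\EL(\ell_u(G),\C))$, not in $\EL(\ell_u(G),\C)$ as you wrote.)

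The reverse direction in your proposal is a statement of intent rather than a proof, and the obstacle you flag --- producing finitely supported data with uniform support control by testing $M$ against operators --- is not resolved because that is not how the implication works. No test operators are needed: condition (a) already supplies a net $\xi_\beta\in\mathcal{W}_{00}$ of finitely supported elements with $\hat{\xi}_\beta\to M$ weak-*, and invariance of $M$ together with the agreement of the actions $\bullet$ and $\star$ on $\ell_u(G)$ gives $w^*-\lim_\beta T(\xi_\beta-s\star\xi_\beta)=0$ for every $T$ and every generator $s\in S$, i.e.\ weak convergence to $0$ of the invariance defects. The two steps you are missing are: (i) Mazur's lemma applied in $\bigoplus_{s\in S}\ell_u(G)$, so that suitable finite convex combinations $\xi'_n$ (still finitely supported, and still satisfying $\langle\xi'_n,\mathds{1}_G\rangle_{\C}\ge\frac{1}{2}1_G$ thanks to condition (b) and passing to a cofinal subnet) satisfy $\sup_{s\in S}\Vert\xi'_n-s\star\xi'_n\Vert_u\to 0$ in \emph{norm}; and (ii) the normalization $\zeta_n=\vert\xi'_n\vert/\sum_{h\in G}\vert(\xi'_n)_h\vert$, which produces genuine $\C$-valued probability measures and obeys the estimate $\Vert s\star\zeta_n-\zeta_n\Vert_u\le 4\Vert s\star\xi'_n-\xi'_n\Vert_u$. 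Without the Mazur step the weak-* information never becomes the norm estimate that Definition \ref{definition : exactness} requires, and without using $M\in\mathcal{W}$ you have no access to finitely supported approximants at all; this is exactly where your sketch stops short of a proof.
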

The existence of an invariant expectation and its relation to exactness of groups 
relies on the properties of a certain subspace $\mathcal{W}$ of $\EL(\EL(\ell_u(G),\ell_{\infty}(G)),\ell_{\infty}(G))$.
It is the choice of this subspace that allows us to carry out approximation arguments 
in a setting suitable
for exactness.

We apply the above characterization to compute the bounded cohomology of exact groups
with coefficients in a new class of Banach modules.
The motivation for studying bounded cohomology in the context of  
exactness comes from a question of N.~Higson, who asked if there is a cohomological
characterization of exactness. The first such characterization was proved in \cite{brodzki-niblo-wright},
but in terms of a cohomology theory introduced in that article. 
One natural direction to investigate is whether the characterization of amenability in terms
of bounded cohomology, due to B.E.~Johnson \cite{johnson-memoir}, could 
be generalized to our setting. 
Here we show, using invariant expectations,
that bounded cohomology with coefficients  in a class of modules defined below,
vanishes for exact groups. Characterizations of exactness via vanishing of bounded cohomology
were proved later in \cite{bnnw} and \cite{monod-amenable}, independently, using some of the ideas
presented here.

The space $\EL(\X,\ell_{\infty}(G))$, in addition to being a $G$-module, 
also carries a natural structure of an $\ell_{\infty}(G)$-module, given
by multiplying the image of an operator $T$ by an element of $\ell_{\infty}(G)$. It is
the existence of this second structure that is essential in our considerations.
A $G$-submodule $\mathcal{E}\subseteq \EL(\X,\ell_{\infty}(G))$, which additionally 
is an $\ell_{\infty}(G)$-module in this sense, will be called a Hopf $G$-module, 
as its two module structures are  defined by the actions of a Hopf-von Neumann algebra 
$\ell_{\infty}(G)$
and its predual Banach algebra $\ell_1(G)$.
By $H_b^1(G,\mathcal{E})$ we denote the bounded cohomology group of $G$ 
in degree 1 with coefficients in a Banach $G$-module $\mathcal{E}$.

\begin{theorem}\label{theorem : vanishing}
Let $G$ be a finitely generated group. If $G$ is exact then
$H^1_b(G,\mathcal{E})=0$ for any weak-* 
closed Hopf $G$-module.
\end{theorem}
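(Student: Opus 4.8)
The plan is to show that every bounded $1$-cocycle $c:G\to\mathcal{E}$ is a coboundary, that is, that there exists $v\in\mathcal{E}$ with $c(g)=g\cdot v - v$ for all $g\in G$; vanishing of $H^1_b(G,\mathcal{E})$ is exactly this assertion. I would first record two structural facts. Since $\mathcal{E}$ is weak-* closed in the dual space $\EL(\X,\ell_\infty(G))\cong\ell_\infty(G,\X^*)=\ell_1(G,\X)^*$, it is itself a dual Banach $G$-module, so that weak-* convergent nets in $\mathcal{E}$ have their limits in $\mathcal{E}$; this is what will let me define $v$ as a limit. Secondly, the two actions making $\mathcal{E}$ a Hopf $G$-module --- the $G$-action and the $\ell_\infty(G)$-action --- satisfy the covariance relation $g\cdot(f\cdot T)=(g\cdot f)\cdot(g\cdot T)$, so jointly they upgrade $\mathcal{E}$ to a module over the convolution algebra $\ell_u(G)$. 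It is this $\ell_u(G)$-module structure that makes the invariant expectation available as an averaging device.

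The model to imitate is B.E.~Johnson's averaging argument for amenable groups: were $G$ amenable with invariant mean $m$, one would define $v\in\mathcal{E}$ by $\langle v,\xi\rangle = -\,m_h\langle c(h),\xi\rangle$ for $\xi$ in the predual of $\mathcal{E}$, whereupon the left-invariance of $m$ and the cocycle identity $c(gh)=c(g)+g\cdot c(h)$ force $g\cdot v - v = c(g)$. For merely exact $G$ no invariant mean exists, and I would substitute the invariant expectation $M:\EL(\ell_u(G),\ell_\infty(G))\to\ell_\infty(G)$ supplied by Theorem~\ref{theorem : main 2}, which is its $\ell_\infty(G)$-coefficient analogue. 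The point of contact is that, for $w\in\mathcal{E}$ and $\phi\in\X$, the assignment $a\mapsto(a\cdot w)(\phi)$ is a bounded operator $\ell_u(G)\to\ell_\infty(G)$ to which $M$ applies; running the cocycle through a construction of this type and applying $M$ yields $\ell_\infty(G)$-valued data, which the Hopf $\ell_\infty(G)$-action reassembles into a candidate element $v$. Since $M$ is a weak-* limit of operators of the special form in its definition, $v$ is a weak-* limit of elements of $\mathcal{E}$ and so lies in $\mathcal{E}$ by weak-* closedness.

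It then remains to verify $g\cdot v - v = c(g)$. Here the equivariance of $M$ plays the role that invariance of $m$ plays above: I would unwind the two module actions so that the $G$-equivariance of the invariant expectation, fed the cocycle identity, telescopes $g\cdot v - v$ onto $c(g)$, while the positivity and unitality of $M$ keep the averaging norm-bounded and correctly normalized so that the trivial part behaves as it should.

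The main difficulty is exactly this matching. The invariant expectation is strictly weaker than a mean --- it takes values in $\ell_\infty(G)$ and is equivariant rather than invariant --- so Johnson's cancellation does not transcribe term by term. The real work is to select the operator built from $c$, living in $\EL(\ell_u(G),\ell_\infty(G))$ and, more precisely, interacting correctly with the distinguished subspace $\mathcal{W}$, so that the $\ell_\infty(G)$-coefficients emitted by $M$ are absorbed precisely by the $\ell_\infty(G)$-module structure of $\mathcal{E}$ and equivariance delivers the coboundary identity on the nose. It is the interplay of the two module structures of the Hopf $G$-module, and not averaging over the group by itself, that carries the argument; this is why the weaker invariant expectation is enough in place of an invariant mean.
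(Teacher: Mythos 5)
Your outline does identify the paper's strategy in broad strokes: average the cocycle against the invariant expectation $M$, use the $\C$-module half of the Hopf structure plus weak-* closedness to keep the resulting element inside $\mathcal{E}$, and use equivariance of $M$ together with the cocycle identity to obtain the coboundary relation. But what you have written is a plan, not a proof, and the two places you explicitly leave open (``the real work'') are exactly where the content lies. First, you never exhibit the operator in $\EL(\ell_u(G),\C)$ to which $M$ is to be applied; the candidates you do name, $a\mapsto(a\cdot w)(\phi)$ for a \emph{fixed} $w\in\mathcal{E}$, are not the right ones, and the crossed-product (twisted) pairing they suggest would spoil the later computation. The operator that works is untwisted and is built from the cocycle alone: for each $x\in\X$ define $\Lambda(x)\in\ell_{\infty}(G,\C)\subseteq\EL(\ell_u(G),\C)$ by $\Lambda(x)_g=[b(g)](x)$, and set $\phi(x)=M(\Lambda(x))$. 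Then $\phi$ is the weak-* limit of the net $b_{\beta}=\sum_{g\in G}(\xi_{\beta})_g\,b(g)$, each term of which lies in $\mathcal{E}$ by the $\C$-module property, so $\phi\in\mathcal{E}$ by weak-* closedness; this is the precise form of the membership argument you only sketch.

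Second, and more seriously, the verification that $g\cdot\phi-\phi=-b(g)$ does not follow from equivariance and unitality alone. After inserting the cocycle identity one finds $g*\Lambda(g^{-1}x)_{g^{-1}h}=\Lambda(x)_h-b(g)(x)$, and to conclude one must know that $M$ sends the function of $h$ which is constantly equal to $b(g)(x)\in\C$ to the element $b(g)(x)$ itself. This is Lemma \ref{lemma : M of a constant function is constant}, and it is not an axiom of invariant expectations: condition (b) of Definition \ref{definition : invariant expectation} only treats the single constant $\mathds{1}_G$, and positivity (which you invoke) plays no role anywhere in this argument. The lemma is proved by choosing $M$ to be a weak-* limit of the net $\{\xi_\beta\}$ of finitely supported $\C$-valued probability measures furnished by exactness, so that $\langle\xi_\beta,f\rangle_{\C}=\left(\sum_{g\in G}(\xi_\beta)_g\right)f'=f'$ for every constant function $f_h\equiv f'$. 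Your proof must therefore fix this particular $M$ (as the paper does), or else supply a separate argument that an arbitrary invariant expectation fixes all constants; as written, the telescoping you describe cannot be completed.
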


Bounded cohomology allows the use of dimension shifting techniques: given
a $G$-module $M$ there is a ``shifting module" $\Sigma M$ such that 
$H^{n+1}_b(G,M)= H^n_b(G,\Sigma M)$. An argument due to 
N.~Monod shows that shifting modules of weak-* closed 
Hopf $G$-modules are again weak-* closed Hopf $G$-modules, which allows
to conclude that all the higher bounded cohomology groups with coefficients in
dual Hopf $G$-modules
also vanish for exact groups.\\

The above results, after being circulated in preprint form, 
were followed by a  rapid development of the relation between bounded cohomology and exactness.
The papers \cite{bnnw} and \cite{monod-amenable} mentioned earlier, as well as \cite{bnnw-homology} and \cite{douglas-nowak}, contain related results.

We would like to thank Nicolas Monod for many 
valuable comments, as well as suggesting
the name Hopf modules and for allowing us to include Proposition 
\ref{proposition : vanishing for higher groups} in this paper. We also thank the referee
for helpful comments.

\tableofcontents

\section{Algebras, duality and topologies}

Let $G$ be a discrete group generated by a finite set $S$, which is symmetric; that is, $S=S^{-1}$.
All Banach spaces we discuss are over $\RR$.
Since in most of our arguments we will view the algebra $\ell_{\infty}(G)$ as an algebra of 
coefficients, throughout the article we will shorten the notation to $\C=\ell_{\infty}(G)$.
We denote by $1_G$ the identity
in $\ell_{\infty}(G)$ and by $\Vert \cdot \Vert_{\C}$ the usual supremum norm.
The algebra $\C$ is equipped with a natural $G$-action,
\begin{equation}\label{equation : action on ell_infty}
(g*f)(h)=f(g^{-1}h)
\end{equation}
for $g,h\in G$ and $f\in {\C}$. 
For a function $f:G\times G\to \RR$ we will denote
$$f_g=f(g,\cdot),$$
so that $f_g$ is a real function on $G$. The set of those $g\in G$ 
for which $f_g\neq 0$ is called the support of $f$ and denoted $\supp f$.

\subsection{The uniform convolution algebra $\ell_u(G)$}
Let 
$${C_c(G,\C)}=\setd{f:G\to \C}{\#\supp f<\infty},$$
where $\supp f$ denotes the support of $f$.
By the above, ${C_c(G,\C)}$ can be viewed as a linear subspace of the space 
of bounded functions
on $G\times G$, each of which vanishes outside of $K\times G$, for some finite $K\subseteq G$.
${C_c(G,\C)}$ is a linear space and we equip it with the norm 
$$\Vert f\Vert_u=\left\Vert \sum_{g\in G} \vert  f_g\vert\right\Vert_{\C}.$$
The space ${C_c(G,\C)}$ is, in a natural way, a subspace of  
$\left( \bigoplus_{g\in G}\ell_1(G)\right)_{\infty}$, the infinite direct sum of
copies of $\ell_1(G)$
 with the norm 
$\Vert \eta \Vert=\sup_{g\in G}\Vert \eta(g)\Vert_1$, where $\eta:G\to \ell_1(G)$, 
and these norms agree on ${C_c(G,\C)}$.
We define multiplication on ${C_c(G,\C)}$ by the formula
$$(f\star f')_g=\sum_{h\in G}f_h(h*f'_{h^{-1}g})$$
and an involution $f^*_g=g*f_{g^{-1}}$, which turns $C_c(G,\C)$ into the algebraic crossed product
$\C\rtimes_{alg} G$.
It can be easily seen that
$\Vert f\star f'\Vert_u\le\Vert f\Vert_u\Vert f'\Vert_u$ for $f,f'\in {C_c(G,\C)}$.

\begin{definition}\label{definition : the uniform convolution algebra}
The uniform convolution algebra, denoted $\ell_u(G)$, is the completion of ${C_c(G,\C)}$
with respect to the norm $\Vert \cdot\Vert_u$.
\end{definition}

Note that there is a natural isometric inclusion of $\ell_1(G)$ in $\ell_u(G)$. 
Indeed, consider $f\in \ell_1(G)$ and define $\xi_g=f(g)1_G$.

For each element $g\in G$ consider  $\delta_g\in \ell_u(G)$ given by
$$(\delta_g)_h=\left\lbrace
\begin{array}{ll}
1_G&\text{ if } h=g,\\
0&\text{ otherwise.}
\end{array}
 \right.$$
We have $\delta_{gh}=\delta_g\star \delta_h$ and there is a natural action of 
$G$ on $\ell_u(G)$ by isometries, also denoted by $\star$, such that
\begin{equation}\label{equation : action star on ell_u}
g\star \xi=\delta_g\star \xi,
\end{equation}
for $\xi\in \ell_u(G)$.
Observe also that $\delta_e$, where $e\in G$ is the identity element, is the unit in $\ell_u(G)$.

\subsection{$G$-duality for $\ell_u(G)$ and $\ell_{\infty}(G,\C)$}

Consider the Banach space 
$$\ell_{\infty}(G,\C)=\setd{f:G\to\C}{\sup_{g\in G}\Vert f_g\Vert_{\C}<\infty}.$$
We denote by $\mathds{1}_G$ the identity 
in $\ell_{\infty}(G,\C)$: $(\mathds{1}_G)_h=1_G$ for every $h\in G$.

This space is naturally isometrically isomorphic to $\ell_{\infty}(G\times G)$;
however, the above notation has advantages in our setting.
The space $\ell_{\infty}(G,\C)$ is a left $G$-module with an action given by 
\begin{equation}\label{equation : action by double convolution}
(g\odot f)_h=g*f_{g^{-1}h},
\end{equation}
where $g,h\in G$.  There is a natural inclusion 
$\ell_u(G)\subseteq \ell_{\infty}(G,\C)$ and the two actions $\star$ and $\odot$ 
agree on $\ell_u(G)$.

There exists a natural $\C$-valued pairing between the elements of $\ell_u(G)$ and 
$\ell_{\infty}(G,\C)$,
$\langle \cdot,\cdot\rangle_{\C}:\ell_u(G)\times \ell_{\infty}(G,\C)\to \C$ given by 
\begin{equation}\label{equation : pairing}
\langle \xi,f\rangle_{\C}=\sum_{g\in G}\xi_g f_g.
\end{equation}
It is well-defined since 
$$\vert \langle \xi,f\rangle_{\C}\vert\le \sum_{g\in G}\vert \xi_g\vert\, \vert f_g\vert\le \Vert f\Vert_{\ell_{\infty}(G,\C)}\Vert \xi\Vert_{u}1_G.$$

\begin{lemma}\label{lemma: conjugation by g in the duality}
For $\xi\in\ell_u(G)$ and $f\in \ell_{\infty}(G,\C)$ we have
$$\langle g\star\xi,f\rangle_{\C}=g*\langle \xi,g^{-1}\odot f\rangle_{\C}.$$
\end{lemma}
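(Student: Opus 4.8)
The plan is to prove the identity by direct computation, unwinding both sides to the same expression indexed by $G$. First I would make the action $\star$ explicit: since $g\star\xi=\delta_g\star\xi$ by \eqref{equation : action star on ell_u}, the convolution formula together with $(\delta_g)_h=1_G$ exactly when $h=g$ collapses the defining sum to a single term, giving $(g\star\xi)_k=g*\xi_{g^{-1}k}$ for every $k\in G$. Substituting this into the pairing \eqref{equation : pairing} yields
\[
\langle g\star\xi,f\rangle_{\C}=\sum_{k\in G}\big(g*\xi_{g^{-1}k}\big)\,f_k,
\]
which will be the target expression.

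For the right-hand side I would first compute the inner pairing. By \eqref{equation : action by double convolution} we have $(g^{-1}\odot f)_h=g^{-1}*f_{gh}$, so
\[
\langle \xi,g^{-1}\odot f\rangle_{\C}=\sum_{h\in G}\xi_h\,\big(g^{-1}*f_{gh}\big).
\]
Then I would apply $g*$ to this element of $\C$. This step uses two elementary properties of the $G$-action \eqref{equation : action on ell_infty} on $\C=\ell_\infty(G)$: that $g*$ is multiplicative for the pointwise product (since $(g*(ab))(h)=a(g^{-1}h)\,b(g^{-1}h)$) and that it obeys the group law $g*(g^{-1}*a)=a$. Distributing $g*$ across the sum and each product, the inner factors simplify via $g*(g^{-1}*f_{gh})=f_{gh}$, leaving $\sum_{h\in G}(g*\xi_h)\,f_{gh}$. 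The change of variables $k=gh$ turns this into $\sum_{k\in G}(g*\xi_{g^{-1}k})\,f_k$, which matches the left-hand side and closes the argument.

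The one point that requires care, rather than being purely formal, is justifying that $g*$ commutes with the infinite sum defining the pairing. Since $\xi\in\ell_u(G)$, the family $(\xi_g)_{g\in G}$ is absolutely summable in the sense that $\big\Vert\sum_{g}\vert\xi_g\vert\big\Vert_{\C}<\infty$, so the pairing converges in $\C$ as already noted after \eqref{equation : pairing}; because $g*$ acts isometrically on $\C$ it is continuous, and hence passes through the limit. The multiplicativity and group-law identities above are then verified pointwise on $G$, which is immediate. I expect this interchange of limit and action to be the only substantive step; everything else is bookkeeping with the defining formulas and a reindexing of the summation variable.
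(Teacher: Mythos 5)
Your proof is correct and takes essentially the same route as the paper's: unwind the definitions of $\star$, $\odot$, and the pairing $\langle\cdot,\cdot\rangle_{\C}$, then reindex the sum by $k\mapsto gk$. The only difference is organizational --- the paper evaluates both sides pointwise at each $h\in G$, which makes the interchange of $g*$ with the infinite sum automatic, while you work with $\C$-valued identities and justify that interchange via norm convergence of the pairing and continuity of $g*$; both are valid.
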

\begin{proof}
We have 
\begin{eqnarray*}
\langle g\star \xi, f\rangle_{\C}(h)&=&\sum_{k\in G} \left(g\star\xi_k(h)\right)f_k(h)\\
&=&\sum_{k\in G}\xi_{g^{-1}k}(g^{-1}h)f_k(h).
\end{eqnarray*}
On the other hand,
\begin{eqnarray*}
\left(g*\langle \xi, g^{-1}\odot f\rangle_{\C}\right)(h)&=&\langle \xi, g^{-1}\odot f\rangle_{\C}(g^{-1}h)\\
&=&\sum_{k\in G}\xi_k(g^{-1}h)\,\left((g^{-1}\odot f)_{k}(g^{-1}h)\right)\\
&=&\sum_{k\in G}\xi_k(g^{-1}h)\,\left(f_{gk}(g g^{-1}h)\right)
\end{eqnarray*}
Substituting $gk=k'$ we see that the two expressions are equal.
\end{proof}

\subsection{Weak topologies}

Let $\X$ be a Banach space.  One of the main objects of our study will be the module
$\EL(\X,\C)$ of bounded linear maps from $\X$ to $\C$, with its natural
operator norm, which we denote by $\Vert\cdot\Vert_{\EL}$.

If not stated otherwise we consider $\C$ as a dual of $\ell_1(G)$ with its natural weak-* topology.
We will denote weak-* limits in $\C$ by $w^*-\lim$.
The action $*$ defined in (\ref{equation : action star on ell_u}) of $G$ on $\C$ is weak-*-continuous.

\subsubsection*{The weak-* topology on $\EL(\X,\C)$}

The space $\EL(\X,\C)$ is isometrically isomorphic to the space $\ell_{\infty}(G,\X^*)$,
where the isomorphism $I:\EL(\X,\C)\to \ell_{\infty}(G,\X^*)$ is given by 
$$(I(T)_g)(x)=(T(x))_g$$
for  $T\in \EL(\X,\C)$, $x\in X$ and $g\in G$. 
Thus our space $\EL(\X,\C)$ can be identified as
the Banach space dual of $\ell_1(G,\X)$ and as such it can be naturally equipped with 
a weak-* topology. 

The following  description of the weak-* topology on $\EL(\X,\C)$ is convenient in our 
setting.

\begin{proposition}\label{corollary : weak-C topology is pointwise weak* topology}
Let $\X$ be a Banach space and let $\{T_{\beta}\}$ be a net in $\EL(\X,\C)$.
The following conditions are equivalent:
\begin{enumerate}
\renewcommand{\labelenumi}{\normalfont{(\alph{enumi})}}
\item ${\C}-\lim_{\beta} T_{\beta}=T$,
\item $w^*-\lim_{\beta} T_{\beta}(x)=T(x)$ in $\C$ for every $x\in \X$.
\end{enumerate}
\end{proposition}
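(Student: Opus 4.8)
The plan is to run both implications through the duality $\EL(\X,\C)\cong\ell_\infty(G,\X^*)=(\ell_1(G,\X))^*$ recorded above. Under the isomorphism $I$ the pairing of $T\in\EL(\X,\C)$ with $\phi\in\ell_1(G,\X)$ is
\[
\langle\phi,T\rangle=\sum_{g\in G}(I(T)_g)(\phi(g))=\sum_{g\in G}(T(\phi(g)))_g ,
\]
so condition (a) says precisely that $\langle\phi,T_\beta\rangle\to\langle\phi,T\rangle$ for every $\phi\in\ell_1(G,\X)$, whereas condition (b) unwinds to $\langle\psi,T_\beta(x)\rangle\to\langle\psi,T(x)\rangle$ for every $x\in\X$ and every $\psi\in\ell_1(G)$, the latter bracket being the $\ell_1(G)$--$\C$ duality.

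For (a)$\Rightarrow$(b) I would simply test (a) against the elementary tensors. Given $x\in\X$ and $\psi\in\ell_1(G)$, put $\phi(g)=\psi(g)x$; this lies in $\ell_1(G,\X)$ since $\sum_{g}\|\psi(g)x\|=\|\psi\|_1\|x\|<\infty$, and the pairing collapses to $\langle\phi,T\rangle=\sum_{g}\psi(g)(T(x))_g=\langle\psi,T(x)\rangle$. Hence (a) applied to this $\phi$ is exactly the weak-* convergence of $T_\beta(x)$ to $T(x)$ tested against $\psi$; as $x$ and $\psi$ are arbitrary, (b) follows.

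For (b)$\Rightarrow$(a) I would first treat finitely supported $\phi$. If $\supp\phi=F$ is finite then $\langle\phi,T_\beta\rangle=\sum_{g\in F}\langle\delta_g,T_\beta(\phi(g))\rangle$ is a finite sum, and each term converges by (b), since evaluation at $g\in G$ is pairing with $\delta_g\in\ell_1(G)$ and hence is weak-* continuous on $\C$; a finite sum of convergent nets converges, giving $\langle\phi,T_\beta\rangle\to\langle\phi,T\rangle$. The genuine difficulty is the passage from finitely supported $\phi$ to an arbitrary $\phi\in\ell_1(G,\X)$, i.e.\ interchanging the limit in $\beta$ with the infinite sum over $g$. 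The finitely supported functions are norm-dense, so the natural device is an $\epsilon/3$ estimate: choose a finitely supported $\phi'$ with $\|\phi-\phi'\|_{\ell_1(G,\X)}$ small, bound the two tail errors $|\langle\phi-\phi',T_\beta\rangle|$ and $|\langle\phi-\phi',T\rangle|$ by the operator norms, and use the finite-support case for $\phi'$. This is precisely the step that needs a uniform bound $\sup_\beta\|T_\beta\|_{\EL}<\infty$, and it is the main obstacle: without such a bound the tail $\sum_{g\notin F}(T_\beta(\phi(g)))_g$ cannot be controlled uniformly in $\beta$, reflecting the fact that the weak-* topology and the topology of pointwise weak-* convergence agree only on norm-bounded sets. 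I would therefore record that $\{T_\beta\}$ may be taken norm-bounded---as is the case in every application of this proposition---so that the $\epsilon/3$ estimate closes the argument; conceptually the statement is the standard coincidence, on bounded sets, of $\sigma((\ell_1(G,\X))^*,\ell_1(G,\X))$ with the topology of pointwise convergence on the dense subspace spanned by the tensors $\psi\otimes x$.
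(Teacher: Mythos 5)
Your argument is correct, and it cannot be compared against the paper's own proof for the simple reason that there is none: the proposition is stated without proof, as an immediate consequence of the identification $\EL(\X,\C)\cong(\ell_1(G,\X))^*$. Your write-up supplies exactly the standard missing argument---(a)$\Rightarrow$(b) by testing against the elementary tensors $\psi\otimes x$, and (b)$\Rightarrow$(a) by linearity on the span of such tensors followed by an $\epsilon/3$ density estimate.

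The boundedness caveat you raise is not a weakness of your method but a genuine gap in the statement as printed. When $\X$ is infinite-dimensional and $G$ is infinite, the span $D$ of the elementary tensors is a \emph{proper} dense subspace of $\ell_1(G,\X)$: any finite sum $\sum_{i}\psi_i\otimes x_i$ takes values in a finite-dimensional subspace of $\X$, so, fixing a bounded linearly independent sequence $\{x_n\}\subseteq\X$ and an enumeration $\{g_n\}$ of $G$, the element $\phi_0$ defined by $\phi_0(g_n)=2^{-n}x_n$ lies in $\ell_1(G,\X)\setminus D$. Directing the finite subsets $F\subseteq D$ by inclusion and using Hahn--Banach to choose $T_F\in(\ell_1(G,\X))^*$ vanishing on $\operatorname{span}(F)$ with $T_F(\phi_0)=1$ (possible because $\operatorname{dist}(\phi_0,\operatorname{span}(F))>0$), one obtains a net satisfying (b) with $T=0$ but violating (a). Thus (b)$\Rightarrow$(a) is genuinely false for unbounded nets, and the proposition should carry the hypothesis $\sup_\beta\Vert T_\beta\Vert_{\EL}<\infty$, exactly as you propose. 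This costs the paper nothing downstream: the implication (a)$\Rightarrow$(b) needs no bound, and wherever (b)$\Rightarrow$(a) is actually invoked the nets are norm-bounded---for instance the net $b_\beta=\sum_{g\in G}(\xi_\beta)_g b(g)$ in the proof of Theorem \ref{theorem : vanishing} satisfies $\Vert b_\beta\Vert_{\EL}\le C\Vert\xi_\beta\Vert_u=C$, because the $\xi_\beta$ there are $\C$-valued probability measures, and the net $\hat{\xi}_\beta$ in the proof of Theorem \ref{theorem : exactness implies invariant operator} lies in the unit ball by the same token.
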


\subsubsection*{The weak topology on $\X$}
Every element $\xi\in \X$ defines a map $\hat{\xi}:\EL(\X,\C)\to \C$ by the formula
$$\hat{\xi}(T)=T(\xi)$$
for every $T\in \EL(\X,\C)$. This defines a natural embedding
$$i:\X \to \EL\left(\EL(\X,\C),\C \right).$$ We denote the natural norm on 
$\EL\left(\EL(\X,\C),\C \right)$ by $\Vert \cdot \Vert_{\EL\EL}$. Since the dual space
$\X^*$ of $\X$ is naturally embedded in $\EL(\X,\C)$ by defining
$T(x)=\varphi(x)1_G$, where $\varphi\in \X^*$, we easily see that
the embedding is isometric.

\begin{definition}\label{definition : weak topology}
Let $\X$ be a Banach space. The weak topology on $\X$ is the restriction  to $\X$ of the
weak-* topology on $\EL(\EL(\X,\C),\C)$.
\end{definition}
This gives the following descrition
\begin{proposition}\label{corollary : description of weak convergence}
A net $\{x_{\beta}\}$ of elements of $\X$ converges 
 weakly to $x\in \X$ if and only if for every $T\in \EL(\X,\C)$ we have
$T(x)=w^*-\lim_{\beta} T(x_{\beta})$.
\end{proposition}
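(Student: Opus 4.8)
The plan is to obtain the statement as a direct unwinding of Definition \ref{definition : weak topology}, combined with the description of the weak-* topology already established in Proposition \ref{corollary : weak-C topology is pointwise weak* topology}. The essential observation is that the latter proposition is stated for an \emph{arbitrary} Banach space, so it may be applied with $\EL(\X,\C)$ substituted for $\X$; this is exactly the space on whose bidual-type space $\EL(\EL(\X,\C),\C)$ the weak topology of $\X$ has just been defined.

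First I would recall that, by Definition \ref{definition : weak topology}, a net $\{x_{\beta}\}$ converges weakly to $x$ precisely when the net of images $\{i(x_{\beta})\}$ converges to $i(x)$ in the weak-* topology on $\EL(\EL(\X,\C),\C)$, where $i$ is the embedding sending $\xi$ to the evaluation functional $\hat{\xi}$ determined by $\hat{\xi}(T)=T(\xi)$ for $T\in\EL(\X,\C)$.

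Next I would apply Proposition \ref{corollary : weak-C topology is pointwise weak* topology} to the Banach space $\EL(\X,\C)$ in place of $\X$. It gives that the net $\{i(x_{\beta})\}$ converges in the weak-* topology to $i(x)$ if and only if, for every $T\in\EL(\X,\C)$, one has $w^*-\lim_{\beta}\,(i(x_{\beta}))(T)=(i(x))(T)$ in $\C$. Substituting the definition of $i$, namely $(i(\xi))(T)=\hat{\xi}(T)=T(\xi)$, the right-hand condition reads $w^*-\lim_{\beta} T(x_{\beta})=T(x)$ in $\C$ for every $T$, which is exactly the assertion of the proposition.

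The argument is essentially a definition chase, so no serious obstacle arises. The only point demanding attention is the bookkeeping: one must apply Proposition \ref{corollary : weak-C topology is pointwise weak* topology} not to $\X$ itself but to the Banach space $\EL(\X,\C)$, for which it is equally valid since $\EL(\X,\C)\cong\ell_{\infty}(G,\X^*)$ is again a dual Banach space and the proposition was proved for a generic Banach space. Having made that substitution, composing with the embedding $i$ and unravelling $(i(\xi))(T)=T(\xi)$ yields the claim immediately.
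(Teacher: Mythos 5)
Your proof is correct and matches the paper's intent exactly: the paper states this proposition as an immediate consequence of Definition \ref{definition : weak topology} together with Proposition \ref{corollary : weak-C topology is pointwise weak* topology} applied to the Banach space $\EL(\X,\C)$, which is precisely your definition chase through the embedding $i$ and the identity $(i(\xi))(T)=T(\xi)$. (Your aside that $\EL(\X,\C)$ is a dual space is unnecessary --- the cited proposition holds for an arbitrary Banach space --- but this is harmless.)
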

The weak topology of Definition \ref{definition : weak topology}
is the same as the weak topology on $\X$ in the classical sense. 
For our purposes it suffices to see that  the weak topology in the sense of 
Definition \ref{definition : weak topology}
is formally stronger than the weak topology on $\X$ in the classical sense.

\section{Exactness and invariant expectations}
\subsection{Exact groups}
The term \emph{exact group} originates in the theory of $C^*$-algebras. 
However, in the last decade many new characterizations 
were discovered and our use of this term is not restricted strictly 
to the $C^*$-algebraic definition.

Originally exact groups were defined by Kirchberg and Wassermann, see \cite{kirchberg-wassermann-1,kirchberg-wassermann-2} in the study of group $C^*$-algebras.
A $C^*$-algebra $\mathcal{A}$ is exact if given any exact sequence
$$0\longrightarrow \mathcal{I}\longrightarrow \mathcal{B}\longrightarrow \mathcal{B}/\mathcal{I}\longrightarrow 0$$
the sequence 
$$0\longrightarrow \mathcal{I}\otimes_{\min}\mathcal{A} \longrightarrow \mathcal{B}\otimes_{\min}\mathcal{A}\longrightarrow \mathcal{B}/\mathcal{I}\otimes_{\min}\mathcal{A}\longrightarrow 0$$
remains exact. Note that the maximal tensor product always preserves short exact sequences
in the above sense.
Exactness of a $C^*$-algebra is weaker than its nuclearity. Indeed, $\mathcal{A}$ is nuclear if 
$\mathcal{B}\otimes_{\min}\mathcal{A}=\mathcal{B}\otimes_{\max}\mathcal{A}$ for any $C^*$-algebra $\mathcal{B}$.

A group $G$ is called exact if its reduced $C^*$-algebra $C^*_r(G)$ is exact in the above 
sense.
We refer to \cite{brown-ozawa,kirchberg-wassermann-1,kirchberg-wassermann-2,willett} for details.

Exactness of a group turned out to be equivalent to property A of Yu due
to work of Guentner and Kaminker  \cite{guentner-kaminker-exactness}
and, subsequently, Ozawa \cite{ozawa}. Property A was introduced in \cite{yu-embeddings}
as a condition sufficient to enable one to embed a group (or, more generally, a metric space) 
coarsely into a Hilbert space. At present there are no known examples of groups which 
embed coarsely into the Hilbert space but do not have property A (see, however, \cite{nowak-nonpropa}).
Property A in \cite{yu-embeddings} was defined in terms of a F\o lner-type condition
which highlights the fact that it can be viewed as a weak amenability-type property.
We refer to \cite{nowak-yu,roe-lectures,willett} for an introduction to property A.

In \cite{higson-roe} Higson and Roe characterized property A and exactness of a group $G$
in terms of topologically
amenable actions on the Stone-\v{C}ech compactification of $G$.
We will use a  version of the characterization from \cite{higson-roe} 
as our definition of exactness. 
\begin{definition}\label{definition : exactness}
A finitely generated  group $G$ is exact if for every $\varepsilon>0$ there exists an 
element $\xi\in \ell_u(G)$ such that 
\begin{enumerate}
\renewcommand{\labelenumi}{\normalfont{(\alph{enumi})}}
\item $\xi$ is finitely supported; that is, $\xi_g=0$ for all but finitely many $g\in G$,
\item $\xi$ is an $\C$-valued probability measure; that is, $\xi_g\ge0$ for all $g\in G$ and
$\sum_{g\in G}\xi_g=1_G$, and
\item $\xi$ is $\varepsilon$-invariant; that is,  $\Vert \xi-s\star\xi\Vert_u\le \varepsilon$ for every generator 
$s\in S$.
\end{enumerate}
\end{definition}

Exactness has numerous consequences in the theory of $C^*$-algebras, index theory and 
geometric group theory.
In particular, Yu proved that if $G$ is has property A or, equivalently, is exact, 
then the coarse Baum-Connes conjecture holds for $G$ \cite{yu-embeddings}.
This on the other hand implies the Novikov conjecture for $G$, the zero-in-the-spectrum
 conjecture and has applications to the positive scalar curvature problem.
More recently exactness was related to isoperimetric inequalities on
finitely generated groups and quantitative invariants like decay of the heat kernel 
and type of asymptotic dimension \cite{nowak-profiles,nowak-advances}. 

Exact groups constitute a very large class of groups. Most notably it includes all amenable groups, 
hyperbolic groups (both in \cite{yu-embeddings}), linear groups \cite{guentner-higson-weinberger}.
We refer to \cite{willett} for a more complete list. 
The task of constructing a group which is not exact turns out to be a
difficult one. At present only one family of examples is known, Gromov's random groups 
\cite{gromov-random}.
The question how to find new examples of groups which would not be
exact is still open.

\subsection{Invariant expectations in $\EL(\EL(\ell_u(G),\C),\C)$ }
A Banach space $\X$ is said to be a bounded left Banach $G$-module if there is 
a homomorphism $\Phi:G\to \EL(\X,\X)$
such that $$\sup_{g\in G}\Vert \Phi(g)\Vert_{\EL(\X,\X)}<\infty.$$
If $\X$ is a left $G$-module, then we will denote the action of $g\in G$ by $gx$ for $x\in \X$.

If $\X$ is a left  $G$-module then $\EL(\X,\C)$ is a bounded left $G$-module
with the left  action of $G$ given by pre- and post-composing with the actions
of $G$:
\begin{eqnarray}\label{equation : left G action on L(E,A)}
(g\cdot T)(x)&=&g*\left(T(g^{-1}x)\right),
\end{eqnarray}
for $T\in \EL(\X,\C)$, $x\in \X$ and $g\in G$.

\begin{lemma}\label{lemma : the action cdot is weak-C continuous}
The above action is weak-* continuous.
\end{lemma}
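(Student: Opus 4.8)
The plan is to verify weak-* continuity of the action $\cdot$ directly from the description of the weak-* topology on $\EL(\X,\C)$ furnished by Proposition \ref{corollary : weak-C topology is pointwise weak* topology}. That proposition says that a net $T_\beta \to T$ in the weak-* topology precisely when $w^*\text{-}\lim_\beta T_\beta(x) = T(x)$ in $\C$ for every $x \in \X$. So to show that $T \mapsto g\cdot T$ is continuous for each fixed $g \in G$, I would take a net $T_\beta$ with ${\C}\text{-}\lim_\beta T_\beta = T$ and aim to show ${\C}\text{-}\lim_\beta (g\cdot T_\beta) = g\cdot T$, which by the same proposition reduces to the pointwise statement
\[
w^*\text{-}\lim_\beta (g\cdot T_\beta)(x) = (g\cdot T)(x) \quad \text{in } \C, \text{ for every } x \in \X.
\]

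The key step is to unwind the definition (\ref{equation : left G action on L(E,A)}), which gives
\[
(g\cdot T_\beta)(x) = g*\bigl(T_\beta(g^{-1}x)\bigr).
\]
By hypothesis, with $y = g^{-1}x$ fixed, we have $w^*\text{-}\lim_\beta T_\beta(y) = T(y)$ in $\C$. The only remaining ingredient is that the left action $*$ of $G$ on $\C$ is weak-*-continuous; this is stated explicitly in the paragraph on weak topologies (``The action $*$\ldots\ is weak-*-continuous''). Applying the weak-*-continuous map $f \mapsto g*f$ to the convergent net $T_\beta(g^{-1}x)$ yields
\[
w^*\text{-}\lim_\beta g*\bigl(T_\beta(g^{-1}x)\bigr) = g*\bigl(T(g^{-1}x)\bigr) = (g\cdot T)(x),
\]
which is exactly the pointwise condition (b). Since $x$ was arbitrary, Proposition \ref{corollary : weak-C topology is pointwise weak* topology} then gives ${\C}\text{-}\lim_\beta (g\cdot T_\beta) = g\cdot T$, as desired.

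I would present the argument for each group element $g$ separately, since the action in question is the action of a fixed $g \in G$ and there is no joint continuity being claimed. The main (and really only) obstacle is bookkeeping: one must be careful that the pre-composition step $x \mapsto g^{-1}x$ is a fixed relabeling of the test vector and does not interfere with taking the limit, and that the post-composition step is precisely the weak-*-continuous map whose continuity is already recorded. Both pieces are available from the excerpt — the pointwise reformulation from Proposition \ref{corollary : weak-C topology is pointwise weak* topology} and the weak-*-continuity of $*$ on $\C$ — so no genuinely hard analytic input is needed; the proof is essentially a two-line reduction once these two facts are combined.
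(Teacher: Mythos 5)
Your proof is correct and follows exactly the paper's argument: reduce weak-* convergence to pointwise weak-* convergence via Proposition \ref{corollary : weak-C topology is pointwise weak* topology}, then unwind the definition of $g\cdot T$ and use the weak-* continuity of the $*$ action on $\C$ applied to the fixed vector $g^{-1}x$. No differences worth noting.
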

\begin{proof}
If $T={\C}-\lim_{\beta}\, T_{\beta}$, then 
\begin{eqnarray*}
w^*-\lim_{\beta} g\cdot T_{\beta}(x)& =&w^*-\lim_{\beta} g*(T_{\beta}(g^{-1}x))\\
&=&g*(w^*-\lim_{\beta}  T_{\beta}(g^{-1}x))\\
&=&g\cdot T(x),
\end{eqnarray*}
where the second equality follows from weak-* continuity of the action on $\C$.
\end{proof}

Observe that given $f\in \ell_{\infty}(G,\C)$ the
pairing $\langle \xi,f \rangle_{\C}$ for $\xi \in \ell_u(G)$ 
gives naturally an operator in $\EL(\ell_u(G),\C)$. Moreover,

\begin{lemma}
$\Vert f\Vert_{\ell_{\infty}(G,\C)}=\Vert f\Vert_{\EL}$. In particular,
the space $\ell_{\infty}(G,\C)$ is isometrically embedded in 
$\EL(\ell_u(G),\C)$.
\end{lemma}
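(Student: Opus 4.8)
The plan is to prove the two inequalities separately. The bound $\Vert f\Vert_{\EL}\le \Vert f\Vert_{\ell_{\infty}(G,\C)}$ is already contained in the estimate recorded immediately before the statement: since $\vert \langle \xi,f\rangle_{\C}\vert\le \Vert f\Vert_{\ell_{\infty}(G,\C)}\Vert \xi\Vert_u\,1_G$ holds pointwise in $\C$, taking the $\C$-norm of both sides gives $\Vert \langle \xi,f\rangle_{\C}\Vert_{\C}\le \Vert f\Vert_{\ell_{\infty}(G,\C)}\Vert \xi\Vert_u$, and the definition of the operator norm yields the inequality after taking the supremum over $\Vert\xi\Vert_u\le 1$.

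For the reverse inequality my plan is to test the operator $f$ against the distinguished elements $\delta_g\in \ell_u(G)$. First I would note that each $\delta_g$ is a unit vector: by the definition of the $u$-norm, $\Vert \delta_g\Vert_u=\Vert\, \vert(\delta_g)_g\vert \,\Vert_{\C}=\Vert 1_G\Vert_{\C}=1$. Next, evaluating the pairing on $\delta_g$ collapses the defining sum to its single nonzero term, $\langle \delta_g,f\rangle_{\C}=\sum_{h\in G}(\delta_g)_h\, f_h=1_G\, f_g=f_g$. Hence $\Vert f\Vert_{\EL}\ge \Vert \langle \delta_g,f\rangle_{\C}\Vert_{\C}=\Vert f_g\Vert_{\C}$ for every $g\in G$, and taking the supremum over $g$ gives $\Vert f\Vert_{\EL}\ge \sup_{g\in G}\Vert f_g\Vert_{\C}=\Vert f\Vert_{\ell_{\infty}(G,\C)}$. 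Combining the two inequalities proves the asserted equality, and the ``in particular'' clause is simply the observation that the map $f\mapsto \langle\,\cdot\,,f\rangle_{\C}$ is then a linear isometry of $\ell_{\infty}(G,\C)$ into $\EL(\ell_u(G),\C)$.

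I expect no serious obstacle here: both halves of the argument rest only on the definitions of $\Vert\cdot\Vert_u$ and of the pairing, together with the fact that $\delta_g$ behaves as a pointwise-evaluation vector. The only step meriting a moment's care is checking that $1_G$, being the multiplicative identity of $\C$, leaves $f_g$ unchanged in the product $1_G\, f_g$, so that $\langle\delta_g,f\rangle_{\C}$ reproduces $f_g$ on the nose rather than up to a scalar; this is precisely what makes the test vectors $\delta_g$ extract the norm $\Vert f_g\Vert_{\C}$ sharply and forces the supremum over $g$ to recover the full $\ell_{\infty}(G,\C)$-norm.
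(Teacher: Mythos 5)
Your proof is correct and follows essentially the same route as the paper: the easy inequality comes from the pairing estimate, and the reverse inequality is obtained by testing $f$ against the unit vectors $\delta_g$, using $\langle\delta_g,f\rangle_{\C}=f_g$. The paper phrases the final step with an $\varepsilon$-argument (choosing $g$ with $\Vert f\Vert_{\ell_{\infty}(G,\C)}\le\Vert f_g\Vert_{\C}+\varepsilon$) rather than taking the supremum over $g$ directly, but this is the same argument.
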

\begin{proof}
The estimate $\Vert  f\Vert_{\EL}\le \Vert f\Vert_{\ell_{\infty}(G,\C)}$ follows easily. 
To see the converse observe that for every $\varepsilon>0$ there is $g\in G$ such that 
$\Vert f\Vert_{\ell_{\infty}(G,\C)}\le \Vert f_g\Vert_{\C} +\varepsilon$. 
Then $\langle \delta_g,f\rangle_{\C}=f_g$ and the required 
inequality follows by taking $\varepsilon$ converging to 0.
\end{proof}

The following lemma shows that $\ell_{\infty}(G,\C)$ is also a $G$-submodule of $\EL(\ell_u(G),\C)$.
\begin{lemma}\label{lemma : actions cdot and star agree o ell_infty}
The actions $\cdot$ and $\odot$ agree on $\ell_{\infty}(G,\C)\subseteq \EL(\ell_u(G),\C)$.
\end{lemma}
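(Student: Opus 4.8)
The plan is to verify the claimed equality of the two actions directly on a generic element $f \in \ell_{\infty}(G,\mathscr{C})$, viewed as an operator in $\EL(\ell_u(G),\mathscr{C})$ via the pairing $\langle \cdot, f \rangle_{\mathscr{C}}$. Since both $g \cdot f$ and $g \odot f$ are again operators in $\EL(\ell_u(G),\mathscr{C})$, it suffices to check that they agree as operators, i.e.\ that $\langle \xi, g \cdot f \rangle_{\mathscr{C}} = \langle \xi, g \odot f \rangle_{\mathscr{C}}$ for every $\xi \in \ell_u(G)$, or equivalently $(g \cdot f)(\xi) = (g \odot f)(\xi)$. By linearity and continuity of both sides in $\xi$, it is enough to verify the identity on the dense subspace spanned by the $\delta_h$, since $\ell_u(G)$ is the completion of $C_c(G,\mathscr{C})$ and both actions are bounded.

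First I would unwind the left-hand side using the definition (\ref{equation : left G action on L(E,A)}) of the $\cdot$-action on $\EL(\ell_u(G),\mathscr{C})$. Writing $T_f$ for the operator $\xi \mapsto \langle \xi, f\rangle_{\mathscr{C}}$, we have $(g \cdot T_f)(\xi) = g * \bigl(T_f(g^{-1} \star \xi)\bigr) = g * \langle g^{-1}\star \xi, f\rangle_{\mathscr{C}}$. This is exactly the setting of Lemma \ref{lemma: conjugation by g in the duality}, which gives $\langle g \star \xi, f\rangle_{\mathscr{C}} = g * \langle \xi, g^{-1}\odot f\rangle_{\mathscr{C}}$; replacing $g$ by $g^{-1}$ and rearranging shows that $g * \langle g^{-1}\star \xi, f\rangle_{\mathscr{C}} = \langle \xi, g \odot f\rangle_{\mathscr{C}} = T_{g \odot f}(\xi)$. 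Thus $(g \cdot T_f)(\xi) = T_{g \odot f}(\xi)$ for all $\xi$, which is precisely the assertion that the operator $g \cdot f$ equals the operator $g \odot f$ under the isometric embedding $\ell_{\infty}(G,\mathscr{C}) \hookrightarrow \EL(\ell_u(G),\mathscr{C})$.

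Alternatively, if one prefers a purely computational route avoiding the appeal to Lemma \ref{lemma: conjugation by g in the duality}, I would evaluate both actions on the generator $\delta_h$ and compare the resulting elements of $\mathscr{C}$ pointwise. Using $(g \cdot T_f)(\delta_h) = g * \bigl((g^{-1}\star\delta_h) \text{ paired with } f\bigr)$ and $g^{-1}\star \delta_h = \delta_{g^{-1}h}$, the left-hand side becomes $g * f_{g^{-1}h}$. On the other hand, the $\odot$-action is defined in (\ref{equation : action by double convolution}) precisely by $(g\odot f)_h = g * f_{g^{-1}h}$, and pairing $\delta_h$ against $g \odot f$ extracts exactly the $h$-component $(g \odot f)_h = g * f_{g^{-1}h}$. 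The two expressions coincide on the nose.

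The argument is essentially a bookkeeping verification, so I do not anticipate a genuine obstacle; the only point requiring care is keeping the two distinct actions $\star$ (on $\ell_u(G)$) and $\odot$ / $*$ (on $\ell_{\infty}(G,\mathscr{C})$ and $\mathscr{C}$) straight, together with the direction of the index shift $g^{-1}h$ versus $h$. The cleanest write-up is the first approach, since Lemma \ref{lemma: conjugation by g in the duality} has already done the substitution $gk = k'$ that underlies the index bookkeeping, leaving only a one-line reduction to complete.
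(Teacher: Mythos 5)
Your proposal is correct and follows essentially the same route as the paper: the paper's proof likewise applies Lemma \ref{lemma: conjugation by g in the duality} (with $g$ replaced by $g^{-1}$) to obtain $(g\cdot f)(\xi)=g*\langle g^{-1}\star \xi,f\rangle_{\C}=(g\odot f)(\xi)$, and then evaluates at $\xi=\delta_h$. Your alternative direct computation on the $\delta_h$ is just an unwound version of the same index bookkeeping, so nothing is missing.
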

\begin{proof}
By Lemma \ref{lemma: conjugation by g in the duality}, for any $f\in \ell_{\infty}(G,\C)$ and 
$\xi\in \ell_u(G)$ we have 
$$(g\cdot f)(\xi)=g*(\langle g^{-1}\star \xi,f\rangle_{\C})=(g\odot f)(\xi).$$
Taking $\xi=\delta_h$ for any $h$ gives the equality. 
\end{proof}

The action of $G$ on the $G$-module $\EL(\EL(\ell_u(G),\C),\C)$ will now be denoted by
$\bullet$ to distinguish it from the action $\cdot$ on $\EL(\ell_u(G),\C)$:
\begin{eqnarray}\label{equation : odot action on ELEL}
(g\bullet \Xi)(T)&=&g*\left(\Xi(g^{-1}\cdot T) \right),
\end{eqnarray}
for $\Xi\in \EL(\EL(\ell_{u}(G),\C),\C)$ and $T\in \EL(\ell_u(G),\C)$.

\begin{lemma}\label{lemma : actions star and odot agree} 
The actions $\bullet$ and $\star$ agree on $\ell_u(G)\subseteq \EL(\EL(\ell_u(G),\C),\C)$.
\end{lemma}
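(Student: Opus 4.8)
The plan is to verify the claimed identity directly on the natural generators, exploiting the fact that the embedding $\ell_u(G)\subseteq \EL(\EL(\ell_u(G),\C),\C)$ is given by the map $i$ with $i(\xi)(T)=\hat\xi(T)=T(\xi)$. Since the statement asserts that two $G$-actions agree, and both actions are continuous, it suffices to check agreement on a dense set; but in fact here I expect to be able to check it cleanly by unwinding definitions, because the action $\star$ on $\ell_u(G)$ is $g\star\xi=\delta_g\star\xi$, while $\bullet$ is defined in (\ref{equation : odot action on ELEL}) by $(g\bullet\Xi)(T)=g*(\Xi(g^{-1}\cdot T))$.

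First I would fix $\xi\in\ell_u(G)$, set $\Xi=i(\xi)$, and compute $(g\bullet i(\xi))(T)$ for an arbitrary $T\in\EL(\ell_u(G),\C)$. By definition this equals
\begin{equation*}
(g\bullet i(\xi))(T)=g*\bigl(i(\xi)(g^{-1}\cdot T)\bigr)=g*\bigl((g^{-1}\cdot T)(\xi)\bigr).
\end{equation*}
Now I would expand $(g^{-1}\cdot T)(\xi)$ using the definition (\ref{equation : left G action on L(E,A)}) of the action $\cdot$ on $\EL(\ell_u(G),\C)$, namely $(g^{-1}\cdot T)(\xi)=g^{-1}*\bigl(T(g\star\xi)\bigr)$, where the internal action of $g$ on the argument $\xi\in\ell_u(G)$ is the action $\star$. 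Substituting and using that $g*(g^{-1}*c)=c$ for $c\in\C$ (the action $*$ on $\C$ is by the group, so $g$ and $g^{-1}$ cancel), I obtain
\begin{equation*}
(g\bullet i(\xi))(T)=g*\bigl(g^{-1}*T(g\star\xi)\bigr)=T(g\star\xi)=i(g\star\xi)(T).
\end{equation*}
Since this holds for every $T$, it gives $g\bullet i(\xi)=i(g\star\xi)$, which is exactly the assertion that $\bullet$ and $\star$ agree on $\ell_u(G)$.

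The computation above is essentially a bookkeeping exercise, so I do not expect a genuine obstacle; the only point requiring care is the bookkeeping of \emph{which} action is being applied at each stage. Specifically, in the term $T(g\star\xi)$ the symbol $\star$ denotes the action of $G$ on $\ell_u(G)$ from (\ref{equation : action star on ell_u}), and one must confirm that this is indeed the action intended as ``$g^{-1}x$'' inside the definition (\ref{equation : left G action on L(E,A)}) when $\X=\ell_u(G)$ — that is, that the module structure on $\ell_u(G)$ used to define the $\cdot$-action is the one given by left convolution by $\delta_g$. Granting this consistency of conventions, the cancellation $g*(g^{-1}*\,\cdot\,)=\mathrm{id}$ on $\C$ closes the argument, and no density or topological approximation is needed at all; the identity holds on all of $\ell_u(G)$, not merely on a dense subalgebra.
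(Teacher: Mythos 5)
Your proof is correct and is essentially the paper's own argument: the paper runs the identical computation in the reverse direction, starting from $(\widehat{g\star\xi})(T)=T(g\star\xi)$, inserting $g*(g^{-1}*\,\cdot\,)$, and unwinding the definitions of $\cdot$ and $\bullet$ to reach $(g\bullet\hat{\xi})(T)$. The consistency point you flag (that the module structure on $\ell_u(G)$ used in the $\cdot$-action is the $\star$-action) is indeed the only bookkeeping issue, and the paper handles it the same way.
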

\begin{proof}
We have
\begin{eqnarray*}
(\widehat{g\star\xi})(T)&=&T(g\star\xi)\\
&=&g*\left(g^{-1}*(T(g\star\xi))\right)\\
&=&g*\left(g^{-1}\cdot T(\xi)\right)\\
&=&g*\left(\hat{\xi}(g^{-1}\cdot T)\right)\\
&=&(g\bullet \hat{\xi})(T),
\end{eqnarray*}
for every $T\in \EL(\ell_u(G),\C)$, $\xi\in\ell_u(G)$.\end{proof}

Consider the space
$$\mathcal{W}_{00}=\setd{\xi\in \ell_u(G)}{\#\supp\xi<\infty\text{ and }\langle \xi,\mathds{1}_G\rangle_{\C}=c1_G \text{ for some } c\in \RR}$$
and let $\mathcal{W}_0$ be the closure of $\mathcal{W}_{00}$ in the norm topology in $\ell_u(G)$.

\begin{definition}
Define the subspace $\mathcal{W}\subseteq \EL(\EL(\ell_u(G),\C),\C)$ to be 
the weak-* closure of $\mathcal{W}_{00}$. 
\end{definition}
Clearly, $\mathcal{W}$ is a Banach subspace of $\EL(\EL(\ell_u(G),\C),\C)$. Moreover,
it has a natural structure of a $G$-module.

The above setup allows us to prove now the main theorem characterizing exactness. Amenable
groups are known to be characterized by a F\o lner and Reiter conditions, which correspond
to our Definition \ref{definition : exactness}  of exactness (see \cite{paterson,pier-groups}).
Another standard definition of amenability is through
the existence of an invariant mean on the group. 
The next definition provides a weak version of the invariant mean.

\begin{definition}\label{definition : invariant expectation}
Let $G$ be a finitely generated group. 
An invariant expectation on $G$ is a bounded linear operator 
$M:\EL(\ell_u(G),\C)\to\C$ which satisfies 
\begin{enumerate}
\renewcommand{\labelenumi}{\normalfont{(\alph{enumi})}}
\item $M\in \mathcal{W}$, 
\item $M(\mathds{1}_G)=1_G$, and
\item $M$ is $G$-invariant; that is,  $g\bullet  M = M$ for every $g\in G$.
\end{enumerate}
\end{definition}

The importance of the notion of an invariant expectation is in its
relation to exactness of groups described in Theorem \ref{theorem : main 2}, whose
statement we recall from the introduction.
\setcounter{aux1}{\value{section}}
\setcounter{aux2}{\value{theorem}}
\setcounter{section}{1}
\setcounter{theorem}{0}
\begin{theorem}\label{theorem : exactness implies invariant operator}
A finitely generated group $G$ is exact if and only if 
there exists an invariant expectation on $G$.
\end{theorem}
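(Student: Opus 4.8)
\textbf{The plan} is to prove both implications by translating between the Reiter-type $\varepsilon$-invariant vectors $\xi\in\ell_u(G)$ of Definition \ref{definition : exactness} and the invariant functional $M\in\mathcal{W}$ of Definition \ref{definition : invariant expectation}, using the space $\mathcal{W}$ precisely because its weak-$*$ topology encodes exactly the limit procedure needed to pass from approximate to exact invariance.

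For the forward direction, suppose $G$ is exact. For each $\varepsilon>0$ choose $\xi_\varepsilon\in\ell_u(G)$ satisfying (a)--(c) of Definition \ref{definition : exactness}. Each such $\xi_\varepsilon$ is finitely supported, and since it is an $\C$-valued probability measure we have $\langle\xi_\varepsilon,\mathds{1}_G\rangle_{\C}=\sum_g (\xi_\varepsilon)_g=1_G$, so in particular $\xi_\varepsilon\in\mathcal{W}_{00}$ (with $c=1$). I would view each $\xi_\varepsilon$ as an element of $\EL(\EL(\ell_u(G),\C),\C)$ via the embedding $i$, i.e.\ as the functional $\widehat{\xi_\varepsilon}(T)=T(\xi_\varepsilon)$. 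The norm-closed unit ball region containing these functionals is weak-$*$ compact (by Banach--Alaoglu, noting $\Vert\widehat{\xi_\varepsilon}\Vert_{\EL\EL}\le\Vert\xi_\varepsilon\Vert_u=1$), so the net $\{\widehat{\xi_\varepsilon}\}$ as $\varepsilon\to0$ has a weak-$*$ convergent subnet with limit $M$. By construction $M\in\mathcal{W}$, giving property (a). Property (b): since each $\langle\xi_\varepsilon,\mathds{1}_G\rangle_{\C}=1_G$ and evaluation against $\mathds{1}_G$ is weak-$*$ continuous, $M(\mathds{1}_G)=w^*\text{-}\lim\widehat{\xi_\varepsilon}(\mathds{1}_G)=1_G$. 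Property (c) is where the $\varepsilon$-invariance is consumed: for a generator $s$, Lemma \ref{lemma : actions star and odot agree} identifies $s\bullet\widehat{\xi_\varepsilon}$ with $\widehat{s\star\xi_\varepsilon}$, and $\Vert s\star\xi_\varepsilon-\xi_\varepsilon\Vert_u\le\varepsilon\to0$, so in the limit $s\bullet M=M$ for every generator $s$, hence for all $g\in G$ since $S$ generates $G$ and the action is isometric.

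For the converse, suppose an invariant expectation $M$ exists; I would recover exactness by a Hahn--Banach/separation argument. Property (a) says $M$ lies in the weak-$*$ closure of $\mathcal{W}_{00}\subseteq\ell_u(G)$. The strategy is to show that $G$-invariance of $M$ forces the norm-closed convex hull of $\{s\star\xi-\xi : s\in S,\ \xi\in\mathcal{W}_{00},\ \langle\xi,\mathds{1}_G\rangle_\C=1_G,\ \xi\ge0\}$ to contain elements of arbitrarily small norm; equivalently, that $0$ is in the norm closure of $\{(\xi-s\star\xi)_{s\in S}\}$ inside the relevant product space, which is exactly an approximate invariance statement yielding vectors satisfying (a)--(c). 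The cleanest route is the standard convexity dichotomy: if no $\varepsilon$-invariant probability vector existed, then by separation there would be a continuous linear functional (an element of $\EL(\ell_u(G),\C)$, after identifying the relevant dual) strictly separating $0$ from the set of ``invariance defects,'' and one checks that pairing this separating functional against $M$ contradicts $s\bullet M=M$ together with $M(\mathds{1}_G)=1_G$. Here one must use that $M\in\mathcal{W}$, so that $M$ is genuinely a weak-$*$ limit of elements of $\mathcal{W}_{00}$ rather than an arbitrary invariant functional, which is what makes the separation take place in $\ell_u(G)$ rather than in the larger bidual.

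\textbf{The main obstacle} I anticipate is the converse direction, specifically making the separation argument respect \emph{both} the positivity/probability normalization and the $G$-invariance simultaneously while staying inside the subspace $\mathcal{W}$. The delicate point is that $M$ lives in the weak-$*$ closure $\mathcal{W}$ of $\mathcal{W}_{00}$, not in the norm closure $\mathcal{W}_0$, so extracting \emph{finitely supported, genuinely positive} Reiter vectors from a weak-$*$ limit requires a careful approximation: one must approximate $M$ weak-$*$ by elements of $\mathcal{W}_{00}$, use the invariance of $M$ to control the invariance defects of these approximants on finite subsets of test operators, and then — crucially — convert weak-$*$ approximate invariance back into \emph{norm} approximate invariance in $\ell_u(G)$. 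This last conversion is the technical heart, and I expect it to rely on a Namioka--Day-style convexity argument (passing from weak to norm approximation via convex combinations) adapted to the $\C$-valued setting, together with the isometric identifications of Lemmas \ref{lemma : actions cdot and star agree o ell_infty} and \ref{lemma : actions star and odot agree} to ensure the two module actions line up correctly under the dualities.
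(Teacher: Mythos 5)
Your forward direction is exactly the paper's argument (Banach--Alaoglu applied to the functionals $\hat{\xi}_{\varepsilon}$, followed by a weak-* limit point), and the core of your converse --- approximate $M$ weak-* by elements of $\mathcal{W}_{00}$, use invariance and weak-* continuity of the action to make the defects $\xi_{\beta}-s\star\xi_{\beta}$ converge weakly to $0$, then upgrade weak to norm convergence by convex combinations --- is also exactly what the paper does (Mazur's lemma applied to the tuples $\oplus_{s\in S}(\xi_{\beta}-s\star\xi_{\beta})$ in $\bigoplus_{s\in S}\ell_u(G)$). However, your preferred packaging of the converse, a separation argument over the defect set of \emph{positive, normalized} elements of $\mathcal{W}_{00}$, has a genuine flaw: condition (a) of Definition \ref{definition : invariant expectation} only provides weak-* approximation of $M$ by general \emph{signed} elements of $\mathcal{W}_{00}$ satisfying $\langle\xi,\mathds{1}_G\rangle_{\C}=c\,1_G$, so a functional that strictly separates $0$ from the defects of positive probability vectors cannot be tested against anything that $M$ actually gives you, and no contradiction with $s\bullet M=M$ results. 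Building positivity into the convex set at the separation stage is essentially circular: it presupposes that $M$ is a weak-* limit of positive normalized vectors, which is close to what is being proved.

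The paper's resolution, which you correctly flag as the main obstacle but do not supply, is to decouple the two constraints rather than enforce them ``simultaneously'': run the Mazur argument with no positivity at all, retaining only the lower bound $\langle\xi_{\beta},\mathds{1}_G\rangle_{\C}\ge\frac{1}{2}1_G$ (available along a cofinal subnet because $M(\mathds{1}_G)=1_G$, and preserved by convex combinations since the constraint is convex), so as to obtain signed, finitely supported $\xi_n'\in\mathcal{W}_{00}$ with $\sup_{s\in S}\Vert \xi_n'-s\star\xi_n'\Vert_u\to 0$; and only \emph{afterwards} restore positivity by the nonlinear normalization $(\zeta_n)_g=\vert(\xi_n')_g\vert/\sum_{h\in G}\vert(\xi_n')_h\vert$. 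The order matters because $\xi\mapsto\vert\xi\vert$ is compatible with norm convergence but not with weak or weak-* convergence, which is precisely why positivity cannot be carried through the limiting stage. What makes the normalization legitimate is a quantitative estimate the paper proves as a separate lemma: $\Vert s\star\zeta_n-\zeta_n\Vert_u\le 4\Vert s\star\xi_n'-\xi_n'\Vert_u$, obtained from the triangle inequality $\Vert s\star\vert\xi_n'\vert-\vert\xi_n'\vert\,\Vert_u\le\Vert s\star\xi_n'-\xi_n'\Vert_u$ together with the bound $\bigl\Vert 1/\bigl(h*\sum_{g\in G}\vert(\xi_n')_g\vert\bigr)\bigr\Vert_{\C}\le 2$ coming from the $\frac{1}{2}1_G$ lower bound. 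This factor-4 lemma is the concrete step missing from your proposal; with it, the $\zeta_n$ satisfy all three conditions of Definition \ref{definition : exactness} and the converse closes.
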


\setcounter{section}{\value{aux1}}
\setcounter{theorem}{\value{aux2}}

\begin{proof}
Consider a sequence $\{\xi_n\}$ where 
$\xi_n$ is obtained from the definition of exactness with $\varepsilon=\frac{1}{n}$. Each $\xi_n$ 
is an element of $\ell_u(G)$ and as such induces a continuous linear map
$\hat{\xi}_n\in \EL(\EL(\ell_u(G),\C),\C)$. We consider this last space with the 
weak-* topology described in the previous section. 
Since, by the Banach-Alaoglu theorem, the 
unit ball of the space $\EL(\EL(\ell_u(G),\C),\C)$
is compact with this topology,  the sequence $\{\hat{\xi}_n\}$ has a convergent 
subnet $\{\hat{\xi}_{\beta}\}$ and we define 
$$ M ={\C}-\lim_{\beta} \hat{\xi}_{\beta},$$
which is equivalent to
$$M(T)=w^*-\lim_{\beta}\hat{\xi}_{\beta}(T)$$
for every $T\in \EL(\ell_u(G),\C)$, by Corollary \ref{corollary : weak-C topology is pointwise weak* topology}. We will show that $M$ is an invariant expectation on $G$.

Clearly, $M\in \mathcal{W}$ and, in particular, since $\langle \xi_{\beta},\mathds{1}_G\rangle_{\C}=1_G$ 
for every $\beta$, it follows that $ M (\mathds{1}_G)=1_G$. 

By lemmas \ref{lemma : the action cdot is weak-C continuous} and \ref{lemma : actions star and odot agree} we have for $T\in \EL(\ell_u(G),\C)$ and any generator $s\in S$,
\begin{eqnarray*}
(s\bullet  M) (T)&=&s*\left(M(s^{-1}\cdot T)\right)\\
&=&s*\left(w^*-\lim_{\beta} \left(\hat{\xi}_{\beta}(s^{-1}\cdot T) \right)\right)\\
&=&w^*-\lim_{\beta} s*\left(\hat{\xi}_{\beta}(s^{-1}\cdot T) \right)\\
&=&w^*-\lim_{\beta} (s\bullet\hat{\xi}_{\beta})(T)\\
&=&w^*-\lim_{\beta} (\widehat{s\star\xi})(T).
\end{eqnarray*}
Thus 
\begin{equation}\label{equation : G invariance of the mean}
 \left(M -s\bullet  M\right)(T)=w^*-\lim_{\beta}\ (\hat{\xi}_{\beta}-\widehat{s\star\xi_{\beta}})(T)
\end{equation}
and  we have
\begin{eqnarray*}
\Vert (\hat{\xi}_{\beta}-\widehat{s\star\xi_{\beta}})(T)\Vert_{\C}
&\le&\Vert \hat{\xi}_{\beta}-\widehat{s\star\xi}_{\beta}\Vert_{\EL\EL}\Vert T\Vert_{\mathcal{L}}\\
&\le&\Vert \hat{\xi}_{\beta}-\widehat{s\star\xi}_{\beta}\Vert_u\Vert T\Vert_{\mathcal{L}}\\
&\le&\varepsilon_{\beta}\Vert T\Vert_{\mathcal{L}}.
\end{eqnarray*}
for every $n$. Since $\varepsilon_{\beta}$ tends to 0 this implies that the weak-* limit
 in (\ref{equation : G invariance of the mean}) also is 0 for every $T$. This proves $G$-invariance
 of $M$.\\

Conversely, let $M$ be an invariant expectation on $G$. Since $M$ is in $\mathcal{W}$ we can approximate it in the weak-* topology on the module
$\EL\left(\EL(\ell_u(G),\C),\C)\right)$ by finitely supported elements of $\mathcal{W}_{00}$.
More precisely, there exists a net $\{\xi_{\beta}\}$ such that $\xi_{\beta}\in \mathcal{W}_{00}$ 
 and 
$$w^*-\lim_{\beta}\ (\hat{\xi}_{\beta}-s\bullet\hat{\xi}_{\beta})(T)=0\ \ \text{ in } \C$$
for every $T\in \EL(\ell_u(G),\C)$ and $s\in S$.
Since the actions $\bullet$ and $\star$ agree on $\ell_u(G)$, this is the same as
\begin{equation}\label{equation : weak convergence is the same as L-convergence}
w^*-\lim_{\beta} T(\xi_{\beta}-s\star\xi_{\beta})=0
\end{equation}
for every $T\in \EL(\ell_u(G),\C)$ and $s\in S$.
Moreover, $\xi_{\beta}$ satisfy
$$\langle \xi_{\beta},\mathds{1}_G\rangle_{\C}=c_{\beta}1_G,$$
where the net of real numbers $\{c_{\beta}\}$ converges to 1.
By passing to a cofinal subnet, which we will also denote by $\xi_{\beta}$, we can assume that
$$\langle \xi_{\beta},\mathds{1}_G\rangle_{\C}\ge\dfrac{1}{2}1_G.$$

We will now ensure condition (c) of Definition \ref{definition : exactness} and construct a 
sequence $\xi'_{n}$ of finitely supported elements 
in $\mathcal{W}_{00}$ with similar properties to those of $\xi_{\beta}$ and such that,
additionally,
$\Vert \xi_n-s\star\xi_n\Vert_u$ tends to $0$ uniformly for all $s\in S$. 
Consider the space $\bigoplus_{s\in S}\ell_u(G)$ with the norm 
$$\Vert \sigma\Vert_{\oplus u}=\sup_{s\in S}\Vert \sigma_s\Vert_u$$
where $\sigma\in \bigoplus_{s\in S}\ell_u(G)$, $\sigma=\oplus_{s\in S} \sigma_s$.

For each $\beta$ consider the direct sum 
$$\sigma_{\beta}=\oplus_{s\in S}\left(\xi_{\beta}-s\star\xi_{\beta}\right).$$ 
From equation (\ref{equation : weak convergence is the same as L-convergence})
we deduce that for each $s\in S$, the net $\{(\sigma_{\beta})_s\}$ converges in the
weak topology on 
$\ell_u(G)$. Namely, for
each generator $s\in S$, we have
$$\varphi(\xi_{\beta}-s\star\xi_{\beta})\longrightarrow 0$$
for every linear functional $\varphi\in \ell_u(G)^*$. Since the dual spaces 
satisfy the equality $\left(\bigoplus_{s\in S}\ell_u(G)\right)^*=\bigoplus_{s\in S}\ell_u(G)^*$,
the net
$\sigma_{\beta}$ converges in the weak topology on $\bigoplus_{s\in S}\ell_u(G)$. 
Now Mazur's lemma applied to 
the closed convex hull $\Delta$ of  the $\{\sigma_{\beta}\}$ gives that the weak and strong 
closures of $\Delta$ are the same and, in particular, $0\in \Delta$. 
Thus we can approximate $0$ by finite
convex combinations of $\sigma_{\beta}$ in the norm topology on $\bigoplus_{s\in S}\ell_u(G)$.
This means that there
exists a sequence $\{\sigma_n'\}$ such that for each $n\in\NN$ the element $\sigma_n'$ is a finite
convex combination of the $\{\sigma_{\beta}\}$ and with the property that 
$\sigma_n'$ converges strongly to 0 in $\bigoplus_{s\in S}\ell_u(G)$. 
There is a corresponding sequence $\xi_n'\in\ell_u(G)$ such that 
$\sigma'_n=\oplus_{s\in S}\left(\xi_n'-s\star\xi_n'\right)$ which satisfies
$$\sup_{s\in S}\Vert \xi_{\beta}-s\star\xi_{\beta}\Vert_u\longrightarrow 0.$$
Since each $\xi_n'$ is a  finite convex combination of the $\{\xi_{\beta}\}$, we have
\begin{eqnarray*}
\langle \xi_n', \mathds{1}_G\rangle_{\C}&=&\left\langle\sum_{i=1}^{k} c_i \xi_{{\beta}_i}, \mathds{1}_G\right\rangle_{\C}\\
&=&\sum_{i=1}^{k} c_i \left\langle\xi_{{\beta}_i}, \mathds{1}_G\right\rangle_{\C}\\
&\ge&\sum_{i=1}^{k}c_i\left(\dfrac{1}{2}1_G\right)\\
&\ge&\dfrac{1}{2}1_G,
\end{eqnarray*}
where $c_i\ge 0$ and $\sum c_i=1$.
The elements $\xi_n'$ are also finitely supported and belong to $\mathcal{W}_{00}$. Thus the sequence 
$\xi_n'$ satisfies conditions (a) and (c) of Definition \ref{definition : exactness}.

We need to ensure condition (b) from Definition \ref{definition : exactness}. 
To this end consider the sequence $\{\zeta_n\}$ defined as
$$(\zeta_n)_g=\dfrac{\vert(\xi'_n)_g\vert}{\sum_{h\in G} \vert (\xi'_n)_h\vert}.$$
Then $\zeta_n\in \mathcal{W}_{00}$ and we have
$$\langle \zeta_n,\mathds{1}_G\rangle_{\C}= 1_G.$$
Since 
$$\sum_{g\in G}\vert(\xi'_n)_g\vert \ge \Big\vert \sum_{g\in G}(\xi'_n)_g\Big\vert\ge \dfrac{1}{2}1_G$$
we conclude that
\begin{equation}\label{equation : the numerator by which we normalize the xis}
\left\Vert \dfrac{1}{ h*\left(\sum_{g\in G} \vert (\xi'_n)_g\vert\right)}\right\Vert_{\C}\le 2
\end{equation}
for any $ h\in G$.

It remains to show that $\zeta_n$ satisfy the conditions of Definition  \ref{definition : exactness}
Clearly, $(\zeta_n)_g\ge 0$ and $\sum_{g\in G}(\zeta_n)_g=1_G$ for every $n\in \NN$.
It is also obvious that $\zeta_n$ is finitely supported for every $n\in \NN$.
We only need to verify the approximate invariance.
\begin{lemma}
$\Vert s\star\zeta_n-\zeta_n\Vert_u\le4{\left\Vert s\star\xi'_n-\xi'_n\right\Vert_u}$ for every generator $s\in S$.
\end{lemma}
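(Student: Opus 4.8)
The plan is to show that $s\star\zeta_n$ is precisely the normalization of $s\star\xi'_n$, and then to reduce the claim to an elementary pointwise estimate on $G$. First I would record that the action $*$ of $s$ on $\C=\ell_\infty(G)$ is just a shift of functions, so it is multiplicative, commutes with pointwise absolute value and with pointwise division, and that $(s\star\eta)_g=s*\eta_{s^{-1}g}$ for any $\eta\in\ell_u(G)$ by the convolution formula applied to $\delta_s$. Writing $w=\sum_{h\in G}\vert(\xi'_n)_h\vert\in\C$ for the normalizing weight, so that $(\zeta_n)_g=\vert(\xi'_n)_g\vert/w$, a direct computation gives $(s\star\zeta_n)_g=s*\bigl(\vert(\xi'_n)_{s^{-1}g}\vert/w\bigr)=\vert(s\star\xi'_n)_g\vert/(s*w)$, while a reindexing of the sum shows that $s*w=\sum_{h}\vert(s\star\xi'_n)_h\vert$ is exactly the weight of $s\star\xi'_n$. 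Thus $s\star\zeta_n$ is the normalization of $s\star\xi'_n$, and at each fixed $x\in G$ both $\sum_g(s\star\zeta_n)_g(x)$ and $\sum_g(\zeta_n)_g(x)$ equal $1$, so at $x$ we are comparing two probability vectors indexed by $g\in G$.

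Next I would establish the standard normalization inequality: for nonnegative scalars $a_g,b_g$ with finite sums $A=\sum_g a_g$ and $B=\sum_g b_g$, one has $\sum_g\vert a_g/A-b_g/B\vert\le(2/A)\sum_g\vert a_g-b_g\vert$. This follows by splitting $a_g/A-b_g/B=(a_g-b_g)/A+b_g(1/A-1/B)$, estimating the first term directly and using $\vert A-B\vert\le\sum_g\vert a_g-b_g\vert$ for the second. Applying this at each $x\in G$ with $a_g=\vert(s\star\xi'_n)_g\vert(x)$ and $b_g=\vert(\xi'_n)_g\vert(x)$, so that $A=(s*w)(x)$, $a_g/A=(s\star\zeta_n)_g(x)$ and $b_g/B=(\zeta_n)_g(x)$, yields the pointwise bound $\sum_g\vert(s\star\zeta_n-\zeta_n)_g\vert(x)\le\bigl(2/(s*w)(x)\bigr)\sum_g\bigl\vert\,\vert(s\star\xi'_n)_g\vert(x)-\vert(\xi'_n)_g\vert(x)\,\bigr\vert$.

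The crucial input is then equation (\ref{equation : the numerator by which we normalize the xis}), which gives $\Vert 1/(h*w)\Vert_{\C}\le2$ for every $h\in G$; taking $h=s$ shows $(s*w)(x)\ge\frac12$ for all $x$, so the factor $2/(s*w)(x)$ is at most $4$. Finally, the reverse triangle inequality $\bigl\vert\,\vert\alpha\vert-\vert\beta\vert\,\bigr\vert\le\vert\alpha-\beta\vert$ with $\alpha=(s\star\xi'_n)_g(x)$ and $\beta=(\xi'_n)_g(x)$ replaces the difference of absolute values by $\vert(s\star\xi'_n-\xi'_n)_g\vert(x)$. Combining these and taking the supremum over $x\in G$, which is the $\Vert\cdot\Vert_{\C}$-norm appearing in $\Vert\cdot\Vert_u$, delivers $\Vert s\star\zeta_n-\zeta_n\Vert_u\le4\Vert s\star\xi'_n-\xi'_n\Vert_u$. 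The only genuinely delicate step is the bookkeeping in the first paragraph: verifying that the shift $s*$ passes through the normalization so that $s\star\zeta_n$ really is the normalization of $s\star\xi'_n$ with denominator $s*w$, and that this denominator is the one controlled below by $\frac12$; once that identity is in hand, the remainder is the routine $\ell_1$ normalization estimate.
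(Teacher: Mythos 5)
Your proof is correct and follows essentially the same route as the paper: your scalar splitting $a_g/A-b_g/B=(a_g-b_g)/A+b_g(1/A-1/B)$ is exactly the paper's insertion of the connecting term $\vert\xi'_n\vert\big/\left(s*\sum_{g\in G}\vert(\xi'_n)_g\vert\right)$, applied pointwise, with the same two uses of the bound (\ref{equation : the numerator by which we normalize the xis}) and the same final reverse triangle inequality. The only difference is expository: you package the estimate as a pointwise lemma on probability vectors (and verify explicitly that $s\star\zeta_n$ is the normalization of $s\star\xi'_n$, which the paper leaves implicit), whereas the paper runs the same computation directly with $\ell_\infty(G)$-valued norms.
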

\begin{proof}
 We have
\begin{equation}\label{equation : approximate invariance estimate}
\Vert s\star\zeta_n-\zeta_n\Vert_u=\left\Vert \dfrac{s\star\vert \xi'_n\vert}{s*\sum_{g\in G}\vert(\xi'_n)_g\vert}- \dfrac{\vert\xi'_n\vert}{\sum_{g\in G}\vert(\xi'_n)_g\vert} \right\Vert_{u}\\\\
\end{equation}
Adding a connecting term, applying the triangle inequality and (\ref{equation : the numerator by which we normalize the xis}) we obtain
\begin{eqnarray}
&\le&\left\Vert \dfrac{s\star\vert\xi'_n\vert}{s*\sum_{g\in G}\vert(\xi'_n)_g\vert} -\dfrac{\vert\xi'_n\vert}{s*\sum_{g\in G}\vert(\xi'_n)_g\vert} +\dfrac{\vert\xi'_n\vert}{s*\sum_{g\in G}\vert(\xi'_n)_g\vert} - \dfrac{\vert\xi'_n\vert}{\sum_{g\in G}\vert(\xi'_n)_g\vert} \right\Vert_{u}\\\nonumber\\
&\le &2\Vert s\star\vert \xi'_n\vert-\vert\xi'_n\vert\,\Vert_u+\left\Vert\dfrac{\vert \xi'_n\vert}{s*\sum_{g\in G}\vert(\xi'_n)_g\vert} - \dfrac{\vert\xi'_n\vert}{\sum_{g\in G}\vert(\xi'_n)_g\vert} \right\Vert_{u}.
\end{eqnarray}
The second summand, after cancellation, can be estimated as follows. 
We observe that 
\begin{eqnarray*}
\left\Vert\dfrac{\vert\xi'_n\vert}{s*\sum_{g\in G}\vert(\xi'_n)_g\vert} - \dfrac{\vert\xi'_n\vert}{\sum_{g\in G}\vert(\xi'_n)_g\vert} \right\Vert_{u}&\ \ \ \ \ \ \ \ \ \ \ \ \ \ \ \ \ \ \ \ \ \ \ \ \ \ \ \ \  &\ \ \ \ \ \ \ \ \ \ \ \
\end{eqnarray*}
\begin{eqnarray*}
\ \ \ \ \ \ \ \ \ \ \ \ \ \ \ \ \ \ \ \ \ \ \ \ \ \ \ \ \ &=&\left\Vert \left(\sum_{g\in G}\vert(\xi'_n)_g\vert\right)\dfrac{s*\sum_{h\in G}\vert(\xi'_n)_h\vert-\sum_{h\in G}\vert(\xi'_n)_h\vert}{s*\left(\sum_{g\in G}\vert(\xi'_n)_g\vert\right)\left(\sum_{g\in G}\vert(\xi'_n)_g\vert\right)}     \right\Vert_{\C}\\\\
&=&\left\Vert \dfrac{\sum_{h\in G}\vert( s\star\xi'_n)_{h}\vert-\sum_{h\in G}\vert(\xi'_n)_h\vert}{s*\left(\sum_{g\in G}\vert(\xi'_n)_g\vert\right)}  \right\Vert_{\C}\\\\
&\le&\left\Vert {\sum_{h\in G}\vert( s\star\xi'_n)_{h}\vert-\sum_{h\in G}\vert(\xi'_n)_h\vert}\right\Vert_{\C}    
\left\Vert \dfrac{1}{s*\left(\sum_{g\in G}\vert(\xi'_n)_g\vert\right)}  \right\Vert_{\C}\\\\
&\le&2{\left\Vert s\star\vert\xi'_n\vert-\vert\xi'_n\vert\right\Vert_u},
\end{eqnarray*}
where the last step follow from 
the triangle inequality and (\ref{equation : the numerator by which we normalize the xis}).
Altogether we get 
$$\Vert s\star\zeta_n-\zeta_n\Vert_u\le 4{\left\Vert s\star\vert\xi'_n\vert-\vert\xi'_n\vert\right\Vert_u}\le4{\left\Vert s\star\xi'_n-\xi'_n\right\Vert_u},$$
where the last inequality follows again from the triangle inequality.
\end{proof}
Thus the sequence $\zeta_n$ satisfies all three conditions of Definition \ref{definition : exactness}
and the group $G$ is exact.
\end{proof}

We will denote by $\mathcal{M}$ the 
subset of $\EL(\EL(\ell_{u}(G),\C),\C)$ of expectations on $G$,
meaning elements $M$ satisfying only conditions (a) and (b) of Definition 
\ref{definition : invariant expectation}, and by $\mathcal{M}^G\subseteq \mathcal{M}$ the subset of invariant
expectations.
We believe that, in general, $\mathcal{M}^G$ is an infinite set.
A natural question in this context is under what conditions the invariant expectation on $G$ is
 unique?

\begin{remark}[Automatic positivity]\normalfont\label{remark : automatic positivity}
The above proof establishes one additional property of the invariant expectation 
$M$ constructed in the ``only if" part of the proof. Namely, the 
restriction of  $M$ to $\ell_{\infty}(G,\C)$
is a positive map. Indeed, if $f\in \ell_{\infty}(G,\C)$, $f\ge 0$ then 
$\langle\xi_{\beta},f\rangle_{\C}\ge0$ for every $\beta$.
Since 
$$ M (f)=w^*-\lim_{\beta} \langle \xi_n,f\rangle_{\C}$$
and weak-* limits in $\C$ preserve positivity, we have
$ M (f)\ge 0$. However, if we start with an $M$ that is not positive in this sense, by passing through the
``if" and then  the ``only if" part of the proof we obtain a new invariant
expectation with the positivity property.
\end{remark}

We will require one more lemma about invariant expectations 
for the proof of our main results.

\begin{lemma}\label{lemma : M of a constant function is constant}
Let $G$ be an exact, finitely generated group and $M\in\mathcal{M}$ be a weak-*
limit of a net $\{\xi_{\beta}\}$ of elements satisfying conditions of 
Definition \ref{definition : exactness}.
Let $f'\in\C$ and define $f\in\ell_{\infty}(G,\C)$ by $f_g=f'$ for every $g\in G$.
Then $M(f)=f'$. 
\end{lemma}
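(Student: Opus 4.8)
The plan is to unwind the definition of $M$ and reduce the whole statement to the normalization condition (b) of Definition \ref{definition : exactness}. Since $f_g=f'$ for every $g$, we have $\sup_{g}\Vert f_g\Vert_{\C}=\Vert f'\Vert_{\C}<\infty$, so $f\in\ell_{\infty}(G,\C)$ and, as recorded just before Lemma \ref{lemma : actions cdot and star agree o ell_infty}, it defines a bounded operator in $\EL(\ell_u(G),\C)$ acting by $\xi\mapsto\langle\xi,f\rangle_{\C}$, to which $M$ may legitimately be applied. Because $M=\C-\lim_{\beta}\hat{\xi}_{\beta}$, Proposition \ref{corollary : weak-C topology is pointwise weak* topology} gives the pointwise description $M(f)=w^*-\lim_{\beta}\hat{\xi}_{\beta}(f)=w^*-\lim_{\beta}\langle\xi_{\beta},f\rangle_{\C}$, so everything comes down to evaluating this pairing.

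Next I would compute the pairing explicitly. Each $\xi_{\beta}$ is finitely supported, so the sum in (\ref{equation : pairing}) is finite and presents no convergence issue; and since $f_g=f'$ for all $g$ while $\C$ is commutative, we get $\langle\xi_{\beta},f\rangle_{\C}=\sum_{g\in G}(\xi_{\beta})_g\,f'=\bigl(\sum_{g\in G}(\xi_{\beta})_g\bigr)f'$. Condition (b) of Definition \ref{definition : exactness} asserts that each $\xi_{\beta}$ is a $\C$-valued probability measure, i.e. $\sum_{g\in G}(\xi_{\beta})_g=1_G$, whence $\langle\xi_{\beta},f\rangle_{\C}=1_G\,f'=f'$ as $1_G$ is the unit of $\C$. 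Thus the net $\{\langle\xi_{\beta},f\rangle_{\C}\}$ is \emph{constantly} equal to $f'$, and its weak-* limit is $f'$; therefore $M(f)=f'$.

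There is essentially no obstacle here: the entire content is the probability-measure normalization, and the only points needing a word of care are that the constant function lies in $\ell_{\infty}(G,\C)$ and that pulling the scalar $f'$ out of the finite sum is valid, both immediate. As a conceptual remark, one may alternatively observe that $f=f'\cdot\mathds{1}_G$ under the $\C$-module structure on $\ell_{\infty}(G,\C)$, that each $\hat{\xi}_{\beta}$ restricts to a $\C$-linear map on $\ell_{\infty}(G,\C)$ (again by commutativity of $\C$), and that multiplication by a fixed element of $\C$ is weak-*-continuous, so $M$ inherits $\C$-linearity on $\ell_{\infty}(G,\C)$ and $M(f)=f'\cdot M(\mathds{1}_G)=f'\cdot 1_G=f'$ using condition (b) of Definition \ref{definition : invariant expectation}; but the direct net computation above is the shortest route and avoids invoking any module-map property of $M$.
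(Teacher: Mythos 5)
Your proof is correct and follows essentially the same route as the paper's: evaluate the pairing $\langle\xi_{\beta},f\rangle_{\C}=f'\bigl(\sum_{g\in G}(\xi_{\beta})_g\bigr)=f'$ using the probability-measure normalization from Definition \ref{definition : exactness}, then note that the constant net passes to the weak-* limit. The extra care you take in justifying that $f$ defines an operator in $\EL(\ell_u(G),\C)$ and in citing Proposition \ref{corollary : weak-C topology is pointwise weak* topology} is sound but adds nothing beyond what the paper leaves implicit.
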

\begin{proof}
For each $h\in G$  and every $\beta$ we have 
$$\langle \xi_{\beta},f\rangle_{\C}=\sum_{g\in G}(\xi_{\beta})_gf_g=f'\left(\sum_{g\in G}(\xi_{\beta})_g\right)=f',$$
since $\sum (\xi_{\beta})_g=1_G$ 
and this property is preserved by the weak-* limit in $\C$.
\end{proof}

\section{Bounded cohomology of exact groups}

We will now use the facts established in the previous sections to prove 
a vanishing result for bounded cohomology of exact groups. 
Recall that given a group $G$ and a bounded 
Banach $G$-module $\mathcal{E}$, a bounded cocycle is 
a map $b:G\to\mathcal{E}$ such that 
$\sup_{g\in G}\Vert b(g)\Vert<\infty$  and 
$$b(gh)=gb(h)+b(g)$$
for all $g,h\in G$. Such a cocycle 
$b$ is called a boundary if there exists an element $\phi\in \mathcal{E}$ such that
$$b(g)=g \phi-\phi$$
for every $g\in G$.
Then the bounded cohomology in degree 1 of $G$ with coefficients in $\mathcal{E}$ is the defined as
$$H^1_b(G,\mathcal{E})=Z^1(G,\mathcal{E})/B^1(G,\mathcal{E}),$$
where $Z^1(G,\mathcal{E})$ is the space of all bounded cocycles $b:G\to \mathcal{E}$ and 
$B^1(G,\mathcal{E})\subseteq Z^1(G,\mathcal{E})$ is the subspace of all boundaries 
$b:G\to \mathcal{E}$. Thus
$H^1_b(G,\mathcal{E})=0$ if and only if every bounded bounded cocycle 
$b:G\to \mathcal{E}$ is a boundary.
See \cite{dales-et-al,runde-lectures} for details in the context of Banach algebras and 
\cite{monod,monod-icm} in the context of locally compact groups. The bounded cohomology
groups of $G$ are canonically isomorphic to the Hochschild cohomology groups of 
the Banach algebra $\ell_1(G)$, with the same coefficients.

\subsection{Hopf $G$-modules} Since $\C$ is a Banach algebra, for any $\X$ the space $\EL(\X,\C)$ carries a natural structure of a $\C$-module. For $a\in\C$ and $T\in \EL(\X,\C)$ define
$$(aT)(x)=a T(x),$$
where the multiplication on the right is in $\C$. 

\begin{definition}
A  Banach $G$-module $\mathcal{E}$ is called a Hopf $G$-module 
if for some  left bounded Banach $G$-module $\mathcal{X}$ 
it is a subspace $\mathcal{E}\subseteq\EL(\X,\C)$ 
which is both a $G$-module with respect to the action of $G$
and a $\C$-module with respect to the above action of $\C$.
\end{definition}

The intuition behind the notion of Hopf $G$-modules is that they are ``large" submodules of 
$\EL(\X,\C)$, as the two structures are, loosely speaking, transverse to each other.
Indeed, consider a $G$-module $\X$. The dual space $\X^*$ has an induced $G$-module structure
and we consider 
the two natural inclusions of $\X^*$ into $\EL(\X,\C)$. The first one is obtained by mapping a functional 
$\phi$ to $\phi(x)1_G$. The second one is obtained by mapping $\phi$ to $\phi(x)1_e$, 
where $1_e$ is
the Dirac delta function  at the identity element. Note that neither of these inclusions gives rise to 
a structure of a Hopf-$G$-module on $\X^*$. Indeed, $\X^*$ is not a $\C$-submodule under the 
first one $\X^*$ and it is not a $G$-submodule under the second one.

\setcounter{aux1}{\value{section}}
\setcounter{aux2}{\value{theorem}}
\setcounter{section}{1}
\setcounter{theorem}{1}

\begin{theorem}
Let $G$ be a finitely generated group. If $G$ is exact then
 $H^1_b(G,\mathcal{E})=0$ for any weak-* 
closed Hopf $G$-module.
\end{theorem}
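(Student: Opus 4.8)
The plan is to adapt B.E.~Johnson's averaging argument, replacing the invariant mean of the amenable case by an invariant expectation $M$ supplied by exactness. Since $H^1_b(G,\mathcal{E})=0$ means precisely that every bounded cocycle is a boundary, I fix once and for all an invariant expectation $M$ of the special form produced in the proof of Theorem \ref{theorem : exactness implies invariant operator}, namely $M=\C-\lim_\beta\hat\xi_\beta$ for a net $\{\xi_\beta\}$ in $\mathcal{W}_{00}$ obtained from Definition \ref{definition : exactness}, so that Lemma \ref{lemma : M of a constant function is constant} is available. Given a bounded cocycle $b:G\to\mathcal{E}$, the goal is to manufacture an element $\phi\in\mathcal{E}$ with $b(g)=\phi-g\phi$ for all $g$; then $b$ is a boundary with primitive $-\phi$.

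First I would build the candidate primitive. Since $\mathcal{E}\subseteq\EL(\X,\C)$, for each $x\in\X$ the assignment $(F_x)_g=b(g)(x)$ defines an element $F_x\in\ell_{\infty}(G,\C)$: it is bounded because $\Vert b(g)(x)\Vert_\C\le(\sup_g\Vert b(g)\Vert_{\EL})\Vert x\Vert_\X$ and $b$ is a bounded cocycle. Under the isometric inclusion $\ell_{\infty}(G,\C)\subseteq\EL(\ell_u(G),\C)$ I may apply $M$ and set $\phi(x)=M(F_x)$. Linearity in $x$ and the bound $\Vert\phi(x)\Vert_\C\le\Vert M\Vert\,\Vert F_x\Vert_{\ell_{\infty}(G,\C)}$ are immediate, so $\phi\in\EL(\X,\C)$.

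The crucial — and I expect hardest — step is to promote $\phi$ from $\EL(\X,\C)$ into the submodule $\mathcal{E}$, and this is exactly where the Hopf structure and weak-* closedness enter. Unwinding $M=\C-\lim_\beta\hat\xi_\beta$ through the pairing \eqref{equation : pairing} gives $\phi(x)=w^*-\lim_\beta\sum_g(\xi_\beta)_g\,b(g)(x)$ for every $x$. Each $\phi_\beta:=\sum_g(\xi_\beta)_g\,b(g)$ is a finite sum (the $\xi_\beta$ are finitely supported) of the elements $(\xi_\beta)_g\,b(g)$, which lie in $\mathcal{E}$ because $\mathcal{E}$ is a $\C$-module; hence $\phi_\beta\in\mathcal{E}$. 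By Proposition \ref{corollary : weak-C topology is pointwise weak* topology} the pointwise identity $\phi(x)=w^*-\lim_\beta\phi_\beta(x)$ says precisely that $\phi_\beta\to\phi$ in the weak-* topology of $\EL(\X,\C)$, so weak-* closedness of $\mathcal{E}$ yields $\phi\in\mathcal{E}$. It is the transversality of the two module structures that makes this possible, which is why Hopf $G$-modules are the natural setting.

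It remains to verify the boundary identity, a computation driven by the cocycle relation and the compatibilities of the various actions. Using $b(gh)=g\,b(h)+b(g)$ together with the action formula \eqref{equation : left G action on L(E,A)} to rewrite $b(h)(g^{-1}x)$, and substituting $k=gh$, I would show
\[
F_{g^{-1}x}=g^{-1}\odot F_x-c,
\]
where $c\in\ell_{\infty}(G,\C)$ is the constant function with value $g^{-1}*(b(g)(x))$ and $\odot$ is the action \eqref{equation : action by double convolution}. Applying $M$ and using, in turn, Lemma \ref{lemma : M of a constant function is constant} to evaluate $M(c)=g^{-1}*(b(g)(x))$, Lemma \ref{lemma : actions cdot and star agree o ell_infty} to replace $\odot$ by $\cdot$ on $\ell_{\infty}(G,\C)$, and the invariance $g\bullet M=M$ (which by \eqref{equation : odot action on ELEL} reads $M(g^{-1}\cdot T)=g^{-1}*M(T)$), I obtain $\phi(g^{-1}x)=g^{-1}*\phi(x)-g^{-1}*(b(g)(x))$. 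Acting by $g*$ and recognizing the left-hand side as $(g\phi)(x)$ gives $b(g)=\phi-g\phi$, so $b$ is a boundary with primitive $-\phi\in\mathcal{E}$, which completes the proof.
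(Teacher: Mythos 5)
Your proposal is correct and follows essentially the same route as the paper: you construct the same map $x\mapsto F_x$ (the paper's $\Lambda$), define $\phi(x)=M(F_x)$ with the same special invariant expectation, place $\phi$ in $\mathcal{E}$ via the identical finite-sum/weak-* closure argument using the Hopf structure, and verify the boundary identity through the same combination of the cocycle relation, the invariance of $M$, and Lemma \ref{lemma : M of a constant function is constant}. The only difference is cosmetic: you isolate the identity $F_{g^{-1}x}=g^{-1}\odot F_x-c$ before applying $M$, whereas the paper expands $(g\cdot\phi-\phi)(x)$ directly, but the computations coincide.
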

\setcounter{section}{\value{aux1}}
\setcounter{theorem}{\value{aux2}}

\begin{proof}
Let $\X$ be a left $G$-module, $\mathcal{E}\subseteq\EL(\X,\C)$ be as in Theorem 
\ref{theorem : vanishing}, $b:G\to \mathcal{E}$ be
a  bounded cocycle and let $C=\sup_{g\in G}\Vert b(g)\Vert$. We will show that $b$ is a boundary.
Define an operator $\Lambda:\X\to\ell_{\infty}(G,\C)$ by setting
$$\Lambda(x)_g=\left[{b}(g)\right](x),$$
for $x\in \X$.
We have 
$$\Vert b(g)(x)\Vert_{\C}\le \Vert b(g)\Vert_{\EL}\,\Vert x\Vert_{\X}\le C \Vert x\Vert_{\X},$$
so the map is well-defined and continuous.
The group $G$ is exact so,
by Theorem \ref{theorem : exactness implies invariant operator}, there exists an invariant
expectation $M$ on $G$.
Moreover,  $M$ can be chosen to be a weak-* limit of a net $\{\xi_{\beta}\}$, where
$\xi_{\beta}\in \mathcal{W}_{00}$,
as in the proof of Theorem \ref{theorem : exactness implies invariant operator}.
Since $\ell_{\infty}(G,\C)\subseteq \EL(\ell_u(G),\C)$ we define $\phi:\X\to{\C}$ by 
$$\phi(x)= M (\Lambda(x)_g).$$
The process is illustrated by the following diagram\\
$$\begin{diagram}
\X &&\pile{\rTo^{b(g)}\\ \rTo_{\phi}}&& {\C}\\
&\rdTo_{\Lambda} &  &\ruTo_{M }&\\
&& \EL(\ell_u(G),\C)&&\\\\
\end{diagram}
$$
Obviously, $\phi\in \EL(\X,\C)$ and we need to show that $\phi\in \mathcal{E}$.
By the definition of $M$, for every $x\in \X$ we have 
\begin{eqnarray*}
\phi(x)&=&w^*-\lim_{\beta}\langle \xi_{\beta},\Lambda(x)_g\rangle\\
&=&w^*-\lim_{\beta}\left(\sum_{g\in G} (\xi_{\beta})_gb(g)\right)(x).
\end{eqnarray*}
In other words,
$$\phi={\C}-\lim_{\beta} b_{\beta},$$
where $b_{\beta}=\sum_{g\in G}(\xi_{\beta})_gb(g)$.
Since $\mathcal{E}$ is a Hopf $G$-module, each $(\xi_{\beta})_g\in \C$ and only finitely many of them are non-zero,  and $b(g)\in \mathcal{E}$,
we deduce that
$b_{\beta}$ belongs to $\mathcal{E}$.
Since $\mathcal{E}$ is weak-* closed in $\EL(\X,\C)$, $\phi$ is also an element of $\mathcal{E}$.

For any $g\in G$ and $x\in \X$ we have
\begin{eqnarray*}
\left( g\cdot \phi-\phi\right)(x)&=& g*\phi( g^{-1} x)-\phi(x)\\
&=& g*\left( M (\Lambda( g^{-1} x)_{h})\right)- M (\Lambda(x)_{h}).
\end{eqnarray*}
Note that for $f\in\ell_{\infty}(G,\C)$, the invariance of $M$ can also be written in the following way
$$ g*\left( M (f_h)\right)=M((g\odot f)_h)= M \left( g*f_{ g^{-1}{h}}\right).$$
Thus
$$\left( g\cdot \phi-\phi \right)(x)= M  \left( g*\Lambda( g^{-1} x)_{ g^{-1}{h}}\right)- M (\Lambda(x)_{h})$$ and
\begin{eqnarray*}
 g*\Lambda( g^{-1}x)_{ g^{-1}{h}}&=& g*\left(\left({b}( g^{-1}{h})\right)( g^{-1}x)\right)\\
&=& g\cdot{b}( g^{-1}{h})(x)\\
&=& g\cdot\left( g^{-1}\cdot b({h})+b( g^{-1})\cdot {h}\right)(x).
\end{eqnarray*}
For cocycles we have $ g\cdot b( g^{-1})=-b( g)$.
Thus for every ${h}\in G$ we have
\begin{eqnarray*}
 g*\Lambda( g^{-1}x)_{ g^{-1}{h}}&=&\left({b}({h})-{b}( g)\right)(x)\\
&=&\Lambda(x)_{h}-{b}( g)(x).
\end{eqnarray*}
In the above expression,
$b( g)(x)$ is independent of ${h}$. Hence after applying $M$, by Lemma \ref{lemma : M of a constant function is constant}, we have $M(b( g)(x))=b( g)(x)$ and
\begin{eqnarray*}
\left( g\cdot \phi-\phi\right)(x)&=& M (\Lambda(x)_{h})-b( g)(x)- M ( 
\Lambda(x)_{h})\\
&=&-b( g)(x).
\end{eqnarray*}
Finally, setting $\Xi=-\phi$ we get $b( g)= g\cdot \Xi-\Xi$,
so that $b$ is a boundary as required.
\end{proof}

 \section{Concluding remarks}

\subsection{Dimension reduction and higher cohomology groups.}
In the case when $G$ is amenable the fact that $H^1_b(G,\mathcal{E}^*)=0$ for every 
Banach $G$-module 
$\mathcal{E}$ implies that $H_b^n(G,\mathcal{E}^*)=0$ for all $n\ge 1$.  
The method used to prove this is the dimension reduction formula in bounded cohomology,
see for example \cite{johnson-memoir},\cite[Section 10.3]{monod}, \cite[Theorem 2.4.6]{runde-lectures}.
Nicolas Monod
communicated to us the following argument showing that a similar fact is true in the case 
of exactness.
\begin{proposition}[N.~Monod]\label{proposition : vanishing for higher groups}
If $G$ is exact then for every $n\ge 1$ we have 
$H_b^n(G,\mathcal{E})=0$ for every weak-* closed
Hopf $G$-module of $\EL(\X,\C)$, where $\X$ is a left $G$-module.
\end{proposition}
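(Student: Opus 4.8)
The plan is to induct on $n$, using Theorem \ref{theorem : vanishing} as the base case $n=1$ and a dimension-shifting argument to pass from degree $n$ to degree $n+1$, the crucial point being that the shifting operation preserves the class of weak-* closed Hopf $G$-modules. The standard homological input I would invoke (see \cite{monod}) is this: for any bounded Banach $G$-module $E$ the co-induced module $\ell_{\infty}(G,E)$ is relatively injective, so that $H^n_b(G,\ell_{\infty}(G,E))=0$ for all $n\ge 1$; and any admissible short exact sequence of $G$-modules $0\to E\to\ell_{\infty}(G,E)\to\Sigma E\to 0$ then yields, through the associated long exact sequence together with this vanishing, the isomorphism $H^{n+1}_b(G,E)\cong H^n_b(G,\Sigma E)$ for every $n\ge 1$.

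To realize this shift inside the Hopf formalism I would start from the augmentation $\ell_1(G)\to\RR$ with kernel $I_0$, giving the admissible, $G$-equivariant short exact sequence $0\to I_0\to\ell_1(G)\to\RR\to 0$ (split as Banach spaces by $1\mapsto\delta_e$). Tensoring with $\X$ over the projective tensor product and then applying $\EL(-,\C)$ produces the admissible short exact sequence of $G$-modules
$$0\longrightarrow \EL(\X,\C)\stackrel{\eta}{\longrightarrow}\EL(\ell_1(G)\,\widehat{\otimes}\,\X,\C)\longrightarrow \EL(I_0\,\widehat{\otimes}\,\X,\C)\longrightarrow 0,$$
in which the middle term is canonically $\ell_{\infty}(G,\EL(\X,\C))$ with the co-induced action and $\eta$ is the constant embedding. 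Writing $\X'=\ell_1(G)\,\widehat{\otimes}\,\X$ and $\X''=I_0\,\widehat{\otimes}\,\X$ (both $G$-modules under the diagonal action) and restricting the sequence to $\mathcal{E}$, I obtain $0\to\mathcal{E}\to\ell_{\infty}(G,\mathcal{E})\to\Sigma\mathcal{E}\to 0$ with $\ell_{\infty}(G,\mathcal{E})$ relatively injective; thus $\Sigma\mathcal{E}$, realized as a subspace of $\EL(\X'',\C)$, is the dimension shift of $\mathcal{E}$.

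The main work, and the step I expect to be the principal obstacle, is checking that $\Sigma\mathcal{E}\subseteq\EL(\X'',\C)$ is again a weak-* closed Hopf $G$-module. That it is $G$-stable and $\C$-stable should be inherited from $\ell_{\infty}(G,\mathcal{E})$: since $\mathcal{E}$ is $G$- and $\C$-stable, $\eta$ and the quotient map are $G$-equivariant, and the $\C$-action (pointwise postmultiplication) descends to the quotient and agrees with the natural $\C$-action on $\EL(\X'',\C)$. The delicate point is weak-* closedness. Here I would use that $\mathcal{E}$, and hence $\eta(\mathcal{E})\subseteq\ell_{\infty}(G,\mathcal{E})$, are weak-* closed, that the quotient map $\EL(\X',\C)\to\EL(\X'',\C)$ is weak-* continuous (being the adjoint of the predual inclusion $\ell_1(G,\X'')\hookrightarrow\ell_1(G,\X')$), and that the Banach splitting $F\mapsto F(e)$ can be taken weak-* continuous. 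The latter exhibits $\eta(\mathcal{E})$ as the range of a weak-* continuous idempotent, so $\Sigma\mathcal{E}$ is the kernel of a weak-* continuous map and therefore weak-* closed in $\EL(\X'',\C)$. Reconciling the quotient construction with the two transverse module structures and securing this weak-* closedness is where the argument requires the most care.

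Finally I would close the induction. The base case $n=1$ is exactly Theorem \ref{theorem : vanishing}. For the inductive step, assuming the result in degree $n\ge 1$ for all weak-* closed Hopf $G$-modules, the isomorphism $H^{n+1}_b(G,\mathcal{E})\cong H^n_b(G,\Sigma\mathcal{E})$ together with the fact that $\Sigma\mathcal{E}$ is again such a module gives $H^{n+1}_b(G,\mathcal{E})\cong H^n_b(G,\Sigma\mathcal{E})=0$. This establishes $H^n_b(G,\mathcal{E})=0$ for every $n\ge 1$ and every weak-* closed Hopf $G$-module $\mathcal{E}\subseteq\EL(\X,\C)$.
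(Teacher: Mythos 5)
Your proposal is correct, and its skeleton --- induction on $n$ with Theorem \ref{theorem : vanishing} as the base case, dimension shifting along the admissible sequence $0\to\mathcal{E}\to\ell_{\infty}(G,\mathcal{E})\to\Sigma\mathcal{E}\to 0$, and the key verification that $\Sigma\mathcal{E}$ is again a weak-* closed Hopf $G$-module --- is exactly the skeleton of the paper's sketch. Where you differ is in how the Hopf structure on $\Sigma\mathcal{E}$ is exhibited. The paper works on the dual side: it flips the two copies of $G$ via $\ell_{\infty}(G,\ell_{\infty}(\widetilde{G},\X^*))=\ell_{\infty}(\widetilde{G},\ell_{\infty}(G,\X^*))$, lands $\Sigma\mathcal{E}$ inside $\ell_{\infty}(\widetilde{G},\Sigma\X^*)\simeq\EL(K,\ell_{\infty}(\widetilde{G}))$, and has to identify the predual $K$ separately. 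You build the new base module on the predual side from the start: your $\X''=I_0\,\widehat{\otimes}\,\X$ is canonically the kernel $K$ of the summation map $\ell_1(G,\X)\to\X$, and your restriction map sends the class of $F\in\ell_{\infty}(G,\mathcal{E})$ to the operator $\eta\mapsto\sum_{h}F(h)(\eta_h)$ on $K$, which is precisely the embedding the paper's flip produces --- so the two constructions coincide in the end. What your version buys is that every structural map (the restriction $r$, the embedding $\eta$, the splitting $s$) is visibly an adjoint of a map between $\ell_1$-preduals, so weak-* continuity is automatic rather than something to be checked after the fact; the paper's flip is more economical but leaves those dual-space identifications to the reader. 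One repair is needed in your closedness step: what must be weak-* closed in $\EL(\X',\C)$ is not just $\eta(\mathcal{E})$ but all of $\ell_{\infty}(G,\mathcal{E})$ --- this holds because each evaluation $\mathrm{ev}_h$ (the adjoint of $x\mapsto\delta_h\otimes x$) is weak-* continuous and $\mathcal{E}$ is weak-* closed, so $\ell_{\infty}(G,\mathcal{E})=\bigcap_{h\in G}\mathrm{ev}_h^{-1}(\mathcal{E})$; and $\Sigma\mathcal{E}$ is then not the kernel of a weak-* continuous map but the preimage $s^{-1}\left(\ell_{\infty}(G,\mathcal{E})\right)$ of this weak-* closed set under the weak-* continuous section $s(S)=S\circ(\pi\,\widehat{\otimes}\,\mathrm{id}_{\X})$, where $\pi(\xi)=\xi-\left(\sum_{g}\xi(g)\right)\delta_e$: indeed $s(r(F))=F-\eta(F(e))\in\ell_{\infty}(G,\mathcal{E})$ for every $F\in\ell_{\infty}(G,\mathcal{E})$, which gives both inclusions. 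With that adjustment your argument is complete.
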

\begin{proof}[Sketch of proof] 
For every module $\mathcal{E}$ we have
$$H^{n+1}_b(G,\mathcal{E})=H^n_b(G,\Sigma \mathcal{E}),$$
where $\Sigma\mathcal{E}=\ell_{\infty}(G,\mathcal{E})/\mathcal{E}$.
Note that $\Sigma\mathcal{E}$ is a dual space, namely it is the dual of  $K$, the kernel of the 
summation map $\ell_1(G, \mathcal{E}_*) \to \mathcal{E}_*$, where $\mathcal{E}_*$
denotes a given predual of $\mathcal{E}$.

There is a natural inclusion $i:\ell_{\infty}(G,\mathcal{E})\to \ell_{\infty}(G,\ell_{\infty}(\widetilde{G},\X^*))$,
where $\widetilde{G}=G$ and the notation allows one to keep track of the different copies of $G$.
We have natural isometric isomorphisms
$$\ell_{\infty}(G,\ell_{\infty}(\widetilde{G},\X^*))=\ell_{\infty}(G\times \widetilde{G},\X^*)=
\ell_{\infty}(\widetilde{G},\ell_{\infty}(G,\X^*)).$$
This descends to a canonical inclusion
$$\Sigma\mathcal{E} \subseteq \ell_\infty(\widetilde{G}, \Sigma\X^*)$$
and one can verify that, under the resulting identification, the module $\Sigma\mathcal{E}$ is a
Hopf $G$-module, with respect to  
$\ell_{\infty}(\widetilde{G})$, of $\ell_{\infty}(\widetilde{G}, \Sigma \X^*)
\simeq \EL(K,\ell_{\infty}(\widetilde{G}))$.
\end{proof}

\subsection{Relation between $\mathcal{M}$ and $\mathcal{W}$.}
It is also interesting to investigate the relation between the set of positive 
expectations $\mathcal{M}_+$ (in the sense of remark \ref{remark : automatic positivity}) and the module $\mathcal{W}$.
One possibility is that $\mathcal{W}$ is generated by $\mathcal{M}_+$ in the sense that
for every $\Xi\in \mathcal{W}$ there exist $M,M'\in \mathcal{M}_+$ and constants 
$c,c'\in [0,+\infty)$ such that
$$\Xi=cM-c'M'.$$
\begin{question}
Is $\mathcal{W}$ generated by $\mathcal{M}_+$ in the above sense?
\end{question}

\end{document}